\newcommand{\be}{\begin{enumerate}}
\newcommand{\ee}{\end{enumerate}}
\newcommand{\bi}{\begin{itemize}}
\newcommand{\ei}{\end{itemize}}
\newcommand{\bc}{\begin{center}}
\newcommand{\ec}{\end{center}}
\newcommand{\bsp}{\begin{sloppypar}}
\newcommand{\esp}{\end{sloppypar}}
\newcommand{\sglsp}{\ }
\newcommand{\dblsp}{\ \ }
\newtheorem{note}{Note}
\renewcommand{\phi}{\varphi}
\newcommand{\iotaAlt}{\mbox{\it \i}}
\newcommand{\iotaAltS}{\mbox{{\scriptsize \it \i}}}
\newcommand{\qzero}{${\cal Q}_0$}
\newcommand{\qzerou}{${\cal Q}^{\rm u}_{0}$}
\newcommand{\qzerouplus}{$\overline{{\cal Q}^{\rm u}_{0}}$}
\newcommand{\sC}{\mbox{$\cal C$}}
\newcommand{\sD}{\mbox{$\cal D$}}
\newcommand{\sG}{\mbox{$\cal G$}}
\newcommand{\sH}{\mbox{$\cal H$}}
\newcommand{\sJ}{\mbox{$\cal J$}}
\newcommand{\sL}{\mbox{$\cal L$}}
\newcommand{\sM}{\mbox{$\cal M$}}
\newcommand{\sP}{\mbox{$\cal P$}}
\newcommand{\sS}{\mbox{$\cal S$}}
\newcommand{\sT}{\mbox{$\cal T$}}
\newcommand{\sV}{\mbox{$\cal V$}}
\newcommand{\seq}[1]{{\langle #1 \rangle}}
\newcommand{\set}[1]{{\{ #1 \}}}
\newcommand{\mname}[1]{\mbox{\sf #1}}
\newcommand{\NegAlt}{{\sim}}
\newcommand{\And}{\wedge}
\newcommand{\Implies}{\supset}
\newcommand{\Or}{\vee}
\newcommand{\Forall}{\forall}
\newcommand{\Forsome}{\exists}
\newcommand{\Iota}{\mbox{\rm I}}
\newcommand{\IsDef}{\downarrow}
\newcommand{\IsUndef}{\uparrow}
\newcommand{\QuasiEqual}{\simeq}
\newcommand{\Undefined}{\bot}
\newcommand{\IsDefApp}{\!\IsDef}
\newcommand{\IsUndefApp}{\!\IsUndef}
\newcommand{\invertediota}{\rotatebox[origin=c]{180}{$\iota$}}
\newcommand{\pf}{\mbox{\sc pf}}
\newcommand{\pfstar}{$\pf^{\ast}$}
\newcommand{\imps}{\mbox{\sc imps}}
\newcommand{\tps}{\mbox{\sc tps}}
\newcommand{\hol}{\mbox{\sc hol}}
\newcommand{\pvs}{\mbox{\sc pvs}}
\newcommand{\lutins}{\mbox{\sc lutins}}
\newcommand{\sttwu}{\mbox{{\sc stt}{\small w}{\sc u}}}
\newcommand{\proves}[2]{#1 \vdash #2}
\newcommand{\wff}[1]{{\rm wff}_{#1}}
\newcommand{\cwff}[1]{{\rm cwff}_{#1}}
\newcommand{\wffs}[1]{{\rm wffs}_{#1}}
\newcommand{\cwffs}[1]{{\rm cwffs}_{#1}}
\begin{document}

\paper[Andrews' Type Theory with Undefinedness] 
{Andrews' Type Theory
with\\ Undefinedness\thanks{Originally published in: C. Benzm\"uller, C. Brown,
J. Siekmann, and R. Statman, eds., \emph{Reasoning in Simple Type Theory: 
Festschrift in Honor of Peter B. Andrews on his 70th Birthday}, 
\emph{Studies in Logic}, pp.~223--242, College Publications, 2008.}}
{William M. Farmer
\\[1ex] {\normalsize Revised: June 28, 2014}
}

\addtocounter{footnote}{+1}

\begin{abstract}
{\qzero} is an elegant version of Church's type theory formulated and
extensively studied by Peter~B.~Andrews.  Like other traditional
logics, {\qzero} does not admit undefined terms.  The
\emph{traditional approach to undefinedness} in mathematical practice
is to treat undefined terms as legitimate, nondenoting terms that can
be components of meaningful statements.  {\qzerou} is a modification
of Andrews' type theory {\qzero} that directly formalizes the
traditional approach to undefinedness.  This paper presents {\qzerou}
and proves that the proof system of {\qzerou} is sound and complete
with respect to its semantics which is based on Henkin-style general
models.  The paper's development of {\qzerou} closely follows Andrews'
development of {\qzero} to clearly delineate the differences between
the two systems.
\end{abstract}

\section{Introduction}

In 1940 Alonzo Church introduced in~\cite{Church40} a version of
simple type theory with lambda-notation now known as \emph{Church's
type theory}.  Church's students Leon Henkin and Peter B. Andrews
extensively studied and refined Church's type theory.  Henkin proved
that Church's type theory is complete with respect to a semantics
based on general models~\cite{Henkin50} and showed that Church's type
theory can be reformulated so that it is based on only the primitive
notions of function application, function abstraction, equality, and
(definite) description~\cite{Henkin63}.  Andrews devised a simple and
elegant proof system for Henkin's reformulation of Church's type
theory~\cite{Andrews63}.  He also formulated a version of Church's
type theory called {\qzero} that employs the ideas developed by
Church, Henkin, and himself.  {\qzero} is meticulously described and
analyzed in~\cite{Andrews02} and is the logic of the {\tps} Theorem
Proving System~\cite{AndrewsEtAl96}.

Church's type theory has had a profound impact on many areas of
computer science, especially programming languages, automated
reasoning, formal methods, type theory, and formalized mathematics.
It is the fountainhead of a long stream of typed lambda calculi that
includes systems such as \emph{System F}~\cite{GirardEtAl89},
\emph{Martin-L\"of type theory}~\cite{Martin-Lof84}, and the
\emph{Calculus of Constructions}~\cite{CoquandHuet88}.  Several
computer theorem proving systems are based on versions of Church's
type theory including {\hol}~\cite{GordonMelham93},
{\imps}~\cite{FarmerEtAl93,FarmerEtAl96}, Isabelle~\cite{Paulson94},
ProofPower~\cite{Lemma1Ltd00alt}, {\pvs}~\cite{OwreEtAl96}, and
{\tps}.

One of the principal virtues of Church's type theory is that it has
great expressivity, both theoretical and practical. However, like
other traditional logics, Church's type theory assumes that terms are
always defined.  Despite the fact that undefined terms are commonplace
in mathematics (and computer science), undefined terms cannot be
directly expressed in Church's type theory---as they are in
mathematical practice.

A term is \emph{undefined} if it has no prescribed meaning or if it
denotes a value that does not exist.\footnote{Some of the text in this
section concerning undefinedness is taken from~\cite{Farmer04}.}
There are two main sources of undefinedness in mathematics.  The first
source is terms that denote an application of a function.  A function
$f$ usually has both a \emph{domain of definition} $D_f$ consisting of
the values at which it is defined and a \emph{domain of application}
$D^{\ast}_{f}$ consisting of the values to which it may be applied.
(The domain of definition of a function is usually called simply the
\emph{domain} of the function.)  These two domains are not always the
same, but obviously $D_f \subseteq D^{\ast}_{f}$.  A \emph{function
application} is a term $f(a)$ that denotes the application of a
function $f$ to an argument $a \in D^{\ast}_{f}$.  $f(a)$ is
\emph{undefined} if $a \not\in D_{f}$.  We will say that a function is
\emph{partial} if $D_f \not= D^{\ast}_{f}$ and \emph{total} if $D_f =
D^{\ast}_{f}$.

\bsp The second source of undefinedness is terms that are intended to
uniquely describe a value.  A \emph{definite description} is a term
$t$ of the form ``the $x$ that has property $P$''. $t$ is
\emph{undefined} if there is no unique $x$ (i.e., none or more than
one) that has property $P$.  Definite descriptions are quite common in
mathematics but often occur in a disguised form.  For example, ``the
limit of $\mname{sin}\frac{1}{x}$ as $x$ approaches 0'' is a definite
description---which is undefined since the limit does not exist. \esp

There is a \emph{traditional approach to undefinedness} that is widely
practiced in mathematics and even taught to some extent to students in
secondary school.  This approach treats undefined terms as legitimate,
nondenoting terms that can be components of meaningful statements.
The traditional approach is based on three principles:

\be

  \item Atomic terms (i.e., variables and constants) are always
  defined---they always denote something.

  \item Compound terms may be undefined.  A function application
  $f(a)$ is undefined if $f$ is undefined, $a$ is undefined, or $a
  \not\in D_f$.  A definite description ``the $x$ that has property
  $P$'' is undefined if there is no $x$ that has property $P$ or there
  is more than one $x$ that has property $P$.

  \item Formulas are always true or false, and hence, are always
  defined.  To ensure the definedness of formulas, a function
  application $p(a)$ formed by applying a predicate $p$ to an argument
  $a$ is \emph{false} if $p$ is undefined, $a$ is undefined, or $a
  \not\in D_p$.

\ee

A logic that formalizes the traditional approach to undefinedness has
two advantages over a traditional logic that does not.  First, the use
of the traditional approach in informal mathematics can be directly
formalized, yielding a result that is close to mathematical practice.
Second, statements involving partial functions and undefined terms can be
expressed very concisely.  In particular, assumptions about the
definedness of terms and functions often do not have to be made
explicit.  Concise informal mathematical statements involving partial
functions or undefinedness can usually only be expressed in a
traditional logic by verbose statements in which definedness
assumptions are explicit.  For evidence and further discussion of
these assertions, see~\cite{Farmer04}.

We presented in~\cite{Farmer90} a version of Church's type system
named {\pf} that formalizes the traditional approach to undefinedness.
{\pf} is the basis for {\lutins}~\cite{Farmer93b,Farmer94}, the logic
of the {\imps} theorem proving
system~\cite{FarmerEtAl93,FarmerEtAl96}.  The paper \cite{Farmer90}
includes a proof that {\pf} is complete with respect to a Henkin-style
general models semantics.  The proof, however, contains a mistake: the
tautology theorem does not hold in {\pf} as claimed.  This mistake can
be corrected by adding modus ponens and a technical axiom schema
involving equality to {\pf}'s proof system.  In~\cite{Farmer04} we
introduced a version of Church's type system with undefinedness called
{\sttwu} which is simpler than {\pf}.  The proof system of {\sttwu} is
claimed to be complete, but a proof of completeness is not given
in~\cite{Farmer04}.

The purpose of this paper is to carefully show what changes have to be
made to Church's type theory in order to formalize the traditional
approach to undefinedness.  We do this by presenting a modification of
Andrews' type theory {\qzero} called {\qzerou}.  Our goal is to keep
{\qzerou} as close to {\qzero} as possible, changing as few of the
definitions in~\cite{Andrews02} concerning {\qzero} as possible.  We
present the syntax, semantics and proof system of {\qzerou} and prove
that the proof system is sound and complete with respect to its
semantics.  A series of notes indicates precisely where and how
{\qzero} and {\qzerou} diverge from each other.

Our presentation of {\qzerou} differs from the presentation of {\pf}
in~\cite{Farmer90} in the following ways:

\be

  \item The notation and terminology for {\qzerou} is almost identical
  to the notation and terminology for {\qzero} given
  in~\cite{Andrews02} unlike the notation and terminology for {\pf}.

  \item The semantics of {\qzerou} is simpler than the semantics of
  {\pf}.

  \item The proof system of {\qzerou} is complete unlike the proof
  system of {\pf}.

  \item The proof of the completeness theorem for {\qzerou} is
  presented in greater detail than the (erroneous) proof of the
  completeness theorem for {\pf}.

\ee

The paper is organized as follows.  The syntax of {\qzerou} is defined
in section~\ref{sec:syntax}.  A Henkin-style general models semantics
for {\qzerou} is presented in section~\ref{sec:semantics}.
Section~\ref{sec:definitions} introduces several important defined
logical constants and notational abbreviations.
Section~\ref{sec:proofsystem} gives the proof system of {\qzerou}.
Some metatheorems of {\qzerou} and the soundness and completeness
theorems for {\qzerou} are proved in sections~\ref{sec:metatheorems}
and \ref{sec:completeness}, respectively.  The paper ends with a
conclusion in section~\ref{sec:conclusion}.

The great majority of the definitions for {\qzerou} are exactly the
same as those for {\qzero} given in~\cite{Andrews02}.  We repeat
only the most important and least obvious definitions for {\qzero};
for the others the reader is referred to~\cite{Andrews02}.

\section{Syntax of {\qzerou}} \label{sec:syntax}

The syntax of {\qzerou} is almost exactly the same as that of
{\qzero}.  The only difference is that just one iota constant is
primitive in {\qzero}, while infinitely many iota constants are
primitive in {\qzerou}.

A \emph{type symbol} of {\qzerou} is defined inductively as follows:

\be

  \item {\iotaAlt} is a type symbol.

  \item o is a type symbol.

  \item If $\alpha$ and $\beta$ are type symbols, then $(\alpha\beta)$
  is a type symbol.

\ee
Let $\sT$ denote the set of type symbols.  $\alpha,\beta,\gamma,
\ldots$ are syntactic variables ranging over type symbols.  When there
is no loss of meaning, matching pairs of parentheses in type symbols
may be omitted.  We assume that type combination associates to the
left so that a type of the form $((\alpha\beta)\gamma)$ may be written
as $\alpha\beta\gamma$.

The \emph{primitive symbols} of {\qzerou} are the following:

\be

  \item \emph{Improper symbols}: [, ], $\lambda$.

  \item A denumerable set of \emph{variables} of type $\alpha$ for
  each $\alpha \in \sT$: $f_\alpha$, $g_\alpha$, $h_\alpha$,
  $x_\alpha$, $y_\alpha$, $z_\alpha$, $f^{1}_{\alpha}$,
  $g^{1}_{\alpha}$, $h^{1}_{\alpha}$, $x^{1}_{\alpha}$,
  $y^{1}_{\alpha}$, $z^{1}_{\alpha}$, $\dots$\ .

  \item \emph{Logical constants}: $\mname{Q}_{((o\alpha)\alpha)}$ for
  each $\alpha \in \sT$ and $\iota_{(\alpha(o\alpha))}$ for each
  $\alpha \in \sT$ with $\alpha \not= o$.

  \item An unspecified set of \emph{nonlogical constants} of various
  types.

\ee
$\textbf{x}_\alpha, \textbf{y}_\alpha, \textbf{z}_\alpha,
\textbf{f}_\alpha, \textbf{g}_\alpha, \textbf{h}_\alpha, \ldots$ are
syntactic variables ranging over variables of type $\alpha$.

\medskip
\begin{note}[Iota Constants]
Only $\iota_{\iotaAltS(o\iotaAltS)}$ is a primitive logical constant
in {\qzero}; each other $\iota_{\alpha(o\alpha)}$ is a nonprimitive
logical constant in {\qzero} defined according to an inductive scheme
presented by Church in~\cite{Church40}
(see~\cite[pp.~233--4]{Andrews02}).  We will see in the next section
that the iota constants have a different semantics in {\qzerou} than
in {\qzero}.  As a result, it is not possible to define the iota
constants in {\qzerou} as they are defined in {\qzero}, and thus they
must be primitive in {\qzerou}.  Notice that $\iota_{o(oo)}$ is not a
primitive logical constant of {\qzerou}.  It has been left out because
it serves no useful purpose.  It can be defined as a nonprimitive
logical constant as in~\cite[p.~233]{Andrews02} if
desired.\hfill$\blacksquare$
\end{note}
\medskip

We are now ready to define a \emph{wff of type $\alpha$}
($\textit{wff}_\alpha$).  $\textbf{A}_\alpha, \textbf{B}_\alpha,
\textbf{C}_\alpha, \ldots$ are syntactic variables ranging over wffs
of type $\alpha$.  A $\textit{wff}_\alpha$ is then defined inductively
as follows:

\be

  \item A variable or primitive constant of type $\alpha$ is a
    $\wff{\alpha}$.

  \item $[\textbf{A}_{\alpha\beta}\textbf{B}_\beta]$ is a
  $\wff{\alpha}$.

  \item $[\lambda\textbf{x}_\beta\textbf{A}_\alpha]$ is a
  $\wff{\alpha\beta}$.

\ee
A wff of the form $[\textbf{A}_{\alpha\beta}\textbf{B}_\beta]$ is
called a \emph{function application} and a wff of the form
$[\lambda\textbf{x}_\beta\textbf{A}_\alpha]$ is called a
\emph{function abstraction}.  When there is no loss of meaning,
matching pairs of square brackets in wffs may be omitted.  We assume
that wff combination of the form
$[\textbf{A}_{\alpha\beta}\textbf{B}_\beta]$ associates to the left so
that a wff
$[[\textbf{C}_{\gamma\beta\alpha}\textbf{A}_\alpha]\textbf{B}_\beta]$
may be written as
$\textbf{C}_{\gamma\beta\alpha}\textbf{A}_\alpha\textbf{B}_\beta$.

\section{Semantics of {\qzerou}} \label{sec:semantics}

The traditional approach to definedness is formalized in {\qzerou} by
modifying the semantics of {\qzero}.  Two principal changes are made
to the {\qzero} semantics: (1) The notion of a general model is
redefined to include partial functions as well as total functions.
(2) The valuation function for wffs is made into a partial function
that assigns a value to a wff iff the wff is defined according to the
traditional approach.

A \emph{frame} is a collection $\set{\sD_\alpha \;|\; \alpha \in \sT}$
of nonempty domains such that:

\be

  \item $\sD_o = \set{\mname{T},\mname{F}}$.

  \item For $\alpha, \beta \in \sT$, $\sD_{\alpha\beta}$ is some set
    of \emph{total} functions from $\sD_\beta$ to $\sD_\alpha$ if
    $\alpha = o$ and is some set of \emph{partial and total} functions
    from $\sD_\beta$ to $\sD_\alpha$ if $\alpha \not= o$.\footnote{The
      condition that a domain $D_{o\beta}$ contains only total
      functions is needed to ensure that the law of extensionality
      holds for predicates.  This condition is weaker than the
      condition used in the semantics for {\pf}~\cite{Farmer90} and
      its extended versions {\pfstar}~\cite{Farmer93b} and
      {\lutins}\cite{Farmer93b,Farmer94}.  In these logics, a domain
      $D_{\gamma}$ contains only total functions iff $\gamma$ has the
      form $o\beta_1 \cdots \beta_n$ where $n \ge 1$.  The weaker
      condition, which is due to Aaron~Stump~\cite{Stump03}, yields a
      semantics that is simpler.}

\ee 
$\sD_o$ is the \emph{domain of truth values}, $\sD_{\iotaAltS}$ is the
\emph{domain of individuals}, and for $\alpha, \beta \in \sT$,
$\sD_{\alpha\beta}$ is a \emph{function domain}.  For all $\alpha \in
\sT$, the \emph{identity relation} on $\sD_\alpha$ is the total
function $q \in \sD_{o\alpha\alpha}$ such that, for all $x,y \in
\sD_\alpha$, $q(x)(y) = \mname{T}$ iff $x = y$.  For all $\alpha \in
\sT$ with $\alpha \not= o$, the \emph{unique member selector} on
$\sD_\alpha$ is the partial function $f \in \sD_{\alpha(o\alpha)}$
such that, for all $s \in \sD_{o\alpha}$, if the predicate $s$
represents a singleton $\set{x} \subseteq \sD_\alpha$, then $f(s) =
x$, and otherwise $f(s)$ is undefined.

\medskip
\begin{note}[Function Domains]
In a {\qzero} frame a function domain $\sD_{\alpha\beta}$ contains
only total functions, while in a {\qzerou} frame a function domain
$\sD_{o\beta}$ contains only total functions but a function domain
$\sD_{\alpha\beta}$ with $\alpha \not= o$ contains partial functions
as well as total functions.\hfill$\blacksquare$
\end{note}
\medskip

An \emph{interpretation} $\seq{\set{\sD_\alpha \;|\; \alpha \in \sT},
  \sJ}$ of {\qzerou} consists of a frame and a function $\sJ$ that
maps each primitive constant of {\qzerou} of type $\alpha$ to an
element of $\sD_\alpha$ such that $\sJ(\mname{Q}_{o\alpha\alpha})$ is
the identity relation on $\sD_\alpha$ for each $\alpha \in \sT$ and
$\sJ(\iota_{\alpha(o\alpha)})$ is the unique member selector on
$\sD_\alpha$ for each $\alpha \in \sT$ with $\alpha \not= o$.

\medskip
\begin{note}[Definite Description Operators]
The $\iota_{\alpha(o\alpha)}$ in {\qzero} are \emph{description
operators}: if $\textbf{A}_{o\alpha}$ denotes a singleton, then the
value of $\iota_{\alpha(o\alpha)}\textbf{A}_{o\alpha}$ is the unique
member of the singleton, and otherwise the value of
$\iota_{\alpha(o\alpha)}\textbf{A}_{o\alpha}$ is \emph{unspecified}.
In contrast, the $\iota_{\alpha(o\alpha)}$ in {\qzerou} are
\emph{definite description operators}: if $\textbf{A}_{o\alpha}$
denotes a singleton, then the value of
$\iota_{\alpha(o\alpha)}\textbf{A}_{o\alpha}$ is the unique member of
the singleton, and otherwise the value of
$\iota_{\alpha(o\alpha)}\textbf{A}_{o\alpha}$ is
\emph{undefined}.\phantom{XXXX}\hfill$\blacksquare$
\end{note}
\medskip

An \emph{assignment} into a frame $\set{\sD_\alpha \;|\; \alpha \in
\sT}$ is a function $\phi$ whose domain is the set of variables of
{\qzerou} such that, for each variable $\textbf{x}_\alpha$,
$\phi(\textbf{x}_\alpha) \in \sD_\alpha$.  Given an assignment $\phi$,
a variable $\textbf{x}_\alpha$, and $d \in \sD_\alpha$, let $(\phi :
\textbf{x}_\alpha/d)$ be the assignment $\psi$ such that
$\psi(\textbf{x}_\alpha) = d$ and $\psi(\textbf{y}_\beta) =
\phi(\textbf{y}_\beta)$ for all variables $\textbf{y}_\beta \not=
\textbf{x}_\alpha$.

An interpretation $\sM = \seq{\set{\sD_\alpha \;|\; \alpha \in \sT},
  \sJ}$ is a \emph{general model} for {\qzerou} if there is a binary
function $\sV^{\cal M}$ such that, for each assignment $\phi$ and wff
$\textbf{C}_\gamma$, either $\sV^{\cal M}_{\phi}(\textbf{C}_\gamma)
\in \sD_\gamma$ or $\sV^{\cal M}_{\phi}(\textbf{C}_\gamma)$ is
undefined and the following conditions are satisfied for all
assignments $\phi$ and all wffs $\textbf{C}_\gamma$:

\bi

  \item[(a)] Let $\textbf{C}_\gamma$ be a variable of {\qzerou}.  Then
  $\sV^{\cal M}_{\phi}(\textbf{C}_\gamma) = \phi(\textbf{C}_\gamma)$.

  \item[(b)] Let $\textbf{C}_\gamma$ be a primitive constant of
    {\qzerou}.  Then $\sV^{\cal M}_{\phi}(\textbf{C}_\gamma) =
    \sJ(\textbf{C}_\gamma)$.

  \item[(c)] Let $\textbf{C}_\gamma$ be $[\textbf{A}_{\alpha\beta}
  \textbf{B}_\beta]$.  If $\sV^{\cal
  M}_{\phi}(\textbf{A}_{\alpha\beta})$ is defined, $\sV^{\cal
  M}_{\phi}(\textbf{B}_\beta)$ is defined, and the function $\sV^{\cal
  M}_{\phi}(\textbf{A}_{\alpha\beta})$ is defined at the argument
  $\sV^{\cal M}_{\phi}(\textbf{B}_\beta)$, then
  \[\sV^{\cal M}_{\phi}(\textbf{C}_\gamma) = \sV^{\cal
  M}_{\phi}(\textbf{A}_{\alpha\beta})(\sV^{\cal
  M}_{\phi}(\textbf{B}_\beta)),\] the value of the function $\sV^{\cal
  M}_{\phi}(\textbf{A}_{\alpha\beta})$ at the argument $\sV^{\cal
  M}_{\phi}(\textbf{B}_\beta)$.  Otherwise, $\sV^{\cal
  M}_{\phi}(\textbf{C}_\gamma) = \mname{F}$ if $\alpha = o$ and
  $\sV^{\cal M}_{\phi}(\textbf{C}_\gamma)$ is undefined if $\alpha
  \not= o$.

  \item[(d)] Let $\textbf{C}_\gamma$ be
  $[\lambda\textbf{x}_\beta\textbf{B}_\alpha]$.  Then $\sV^{\cal
  M}_{\phi}(\textbf{C}_\gamma)$ is the (partial or total) function $f
  \in \sD_{\alpha\beta}$ such that, for each $d \in
  \sD_\beta$, $f(d) = \sV^{\cal M}_{(\phi : {\bf
  x}_\beta/d)}(\textbf{B}_\alpha)$ if $\sV^{\cal M}_{(\phi : {\bf
  x}_\alpha/d)}(\textbf{B}_\alpha)$ is defined and $f(d)$ is undefined
  if $\sV^{\cal M}_{(\phi : {\bf x}_\beta/d)}(\textbf{B}_\alpha)$ is
  undefined.

\ei

\medskip
\begin{note}[Valuation Function]
In {\qzero}, if $\sM$ is a general model, then $\sV^{\cal M}$ is total
and the value of $\sV^{\cal M}$ on a function abstraction is always a
total function.  In {\qzerou}, if $\sM$ is a general model, then
$\sV^{\cal M}$ is partial and the value of $\sV^{\cal M}$ on a
function abstraction can be either a partial or a total
function.\phantom{XXXX}\hfill$\blacksquare$
\end{note}
\medskip

\begin{proposition}
Let $\sM$ be a general model for {\qzerou}.  Then $\sV^{\cal M}$ is
defined on all variables, primitive constants, function abstractions,
and function applications of type $o$ and is defined on only a proper
subset of function applications of type $\alpha \not= o$.
\end{proposition}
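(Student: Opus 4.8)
The plan is to read off the definedness of $\sV^{\cal M}_{\phi}(\textbf{C}_\gamma)$ directly from the defining clauses~(a)--(d), since $\sV^{\cal M}$ is \emph{given} to exist (this is part of what it means for $\sM$ to be a general model); I am not constructing the valuation but only locating the wffs on which it takes a value in $\sD_\gamma$ as opposed to being undefined. I would proceed by cases on the syntactic form of $\textbf{C}_\gamma$, handling the three categories that are always defined first and isolating all genuine partiality in the final category.

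First I would treat the atomic wffs and the abstractions. For a variable, clause~(a) gives $\sV^{\cal M}_{\phi}(\textbf{C}_\gamma) = \phi(\textbf{C}_\gamma)$, which lies in $\sD_\gamma$ because $\phi$ is an assignment; for a primitive constant, clause~(b) gives $\sV^{\cal M}_{\phi}(\textbf{C}_\gamma) = \sJ(\textbf{C}_\gamma) \in \sD_\gamma$. For a function abstraction $[\lambda\textbf{x}_\beta\textbf{B}_\alpha]$, clause~(d) states that $\sV^{\cal M}_{\phi}(\textbf{C}_\gamma)$ \emph{is} the function $f \in \sD_{\alpha\beta}$ determined pointwise by $\textbf{B}_\alpha$; this denotation is always a member of $\sD_{\alpha\beta}$ and hence defined. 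The point to emphasize here is that the possible partiality of the \emph{function} $f$ is not undefinedness of the \emph{denotation}: even when $f$ fails to be defined at some arguments, the abstraction still denotes $f$.

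Next I would treat a function application $[\textbf{A}_{o\beta}\textbf{B}_\beta]$ of type $o$ via clause~(c). If both $\sV^{\cal M}_{\phi}(\textbf{A}_{o\beta})$ and $\sV^{\cal M}_{\phi}(\textbf{B}_\beta)$ are defined, then, because $\sD_{o\beta}$ contains only \emph{total} functions, $\sV^{\cal M}_{\phi}(\textbf{A}_{o\beta})$ is automatically defined at the argument $\sV^{\cal M}_{\phi}(\textbf{B}_\beta)$, so the application value lies in $\sD_o$; if either component is undefined we land in the ``otherwise'' branch, which returns $\mname{F}$ because $\alpha = o$. In every case the value is defined, and this is precisely where the frame condition that $\sD_{o\beta}$ carries only total functions is used.

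Finally, for a function application $[\textbf{A}_{\alpha\beta}\textbf{B}_\beta]$ of type $\alpha \not= o$, the ``otherwise'' branch of clause~(c) returns undefined, so such applications \emph{may} be undefined; to conclude that $\sV^{\cal M}$ is defined on exactly a nonempty proper subset I must exhibit both an undefined and a defined instance. For undefinedness I would use, for each $\alpha \not= o$, the application $[\iota_{\alpha(o\alpha)}\textbf{E}_{o\alpha}]$ with $\textbf{E}_{o\alpha} = [\lambda\textbf{x}_\alpha\textbf{F}_o]$, where $\textbf{F}_o$ is a closed wff denoting $\mname{F}$ in every general model (constructible from the $\mname{Q}$ constants, as falsehood is in {\qzero}). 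By clause~(d), $\sV^{\cal M}_{\phi}(\textbf{E}_{o\alpha})$ is the empty predicate, a member of $\sD_{o\alpha}$ that represents no singleton; hence the unique member selector $\sJ(\iota_{\alpha(o\alpha)})$ is undefined at it, and clause~(c) then makes $\sV^{\cal M}_{\phi}([\iota_{\alpha(o\alpha)}\textbf{E}_{o\alpha}])$ undefined for every $\phi$. That some type-$(\alpha\not=o)$ applications are defined is witnessed by $[[\lambda\textbf{x}_\alpha\textbf{x}_\alpha]\textbf{y}_\alpha]$, whose value is $\phi(\textbf{y}_\alpha) \in \sD_\alpha$. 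The main obstacle is the properness claim: one must know that a non-singleton predicate genuinely inhabits $\sD_{o\alpha}$, and I would secure this from the abstraction case just proved, since $[\lambda\textbf{x}_\alpha\textbf{F}_o]$ denotes the empty predicate and therefore forces it into $\sD_{o\alpha}$.
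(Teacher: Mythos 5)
Your proof is correct: the paper states this proposition without an explicit proof, treating it as immediate from clauses (a)--(d) of the definition of a general model, and your case analysis---including the observation that totality of the functions in $\sD_{o\beta}$ is precisely what forces type-$o$ applications to be defined, and the witnesses $\iota_{\alpha(o\alpha)}[\lambda\textbf{x}_\alpha F_o]$ (undefined, since the empty predicate lies in $\sD_{o\alpha}$ by clause (d) and the unique member selector is undefined there) and $[\lambda\textbf{x}_\alpha\textbf{x}_\alpha]\textbf{y}_\alpha$ (defined) for the properness claim---is exactly the intended argument. No gaps.
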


\medskip
\begin{note}[Traditional Approach]
{\qzerou} clearly satisfies the three principles of the traditional
approach to undefinedness.  Like other traditional logics, {\qzero}
only satisfies the first principle.\hfill$\blacksquare$
\end{note}
\medskip

Let $\sH$ be a set of $\wffs{o}$ and $\sM$ be a general model for
{\qzerou}.  $\textbf{A}_o$ is \emph{valid} in $\sM$, written $\sM
\models \textbf{A}_o$, if $\sV^{\cal M}_{\phi}(\textbf{A}_o) =
\mname{T}$ for all assignments $\phi$.  $\sM$ is a \emph{general
model} for $\sH$ if $\sM \models \textbf{B}_o$ for all $\textbf{B}_o
\in \sH$.  $\textbf{A}_o$ is \emph{valid (in the general sense)} in
{\sH}, written $\sH \models \textbf{A}_o$, if $\sM \models
\textbf{A}_o$ for every general model $\sM$ for {\sH}.  $\textbf{A}_o$
is \emph{valid (in the general sense)} in {\qzerou}, written ${}
\models \textbf{A}_o$, if $\emptyset \models \textbf{A}_o$.

\medskip
\begin{note}[Mutual Interpretability] \label{note:mutual}
{\qzerou} can be interpreted in {\qzero} by viewing a function of type
$\alpha\beta$ in {\qzerou} as a function (predicate) of type
$o\alpha\beta$ in {\qzero}.  {\qzero} can be interpreted in {\qzerou}
by viewing a function of type $\alpha\beta$ in {\qzero} as a total
function of type $\alpha\beta$ in {\qzerou}.  Thus {\qzero} and
{\qzerou} are equivalent in the sense of being mutually
interpretable.\hfill$\blacksquare$
\end{note}

\section{Definitions and Abbreviations} \label{sec:definitions}

As Andrews does in~\cite[p.~212]{Andrews02}, we will introduce several
defined logical constants and notational abbreviations.  The former
includes constants for true and false, the propositional connectives,
and a canonical undefined wff.  The latter includes notation for
equality, the propositional connectives, universal and existential
quantification, defined and undefined wffs, quasi-equality, and
definite description.

\begin{tabbing}
\= 
\= $[\textbf{A}_\alpha = \textbf{B}_\alpha]$ \hspace{5mm}\= stands for \hspace{3mm}
\= $[\mname{Q}_{o\alpha\alpha}\textbf{A}_\alpha\textbf{B}_\alpha]$.\\

\> 
\> $T_o$ \> stands for
\> $[\mname{Q}_{ooo} = \mname{Q}_{ooo}]$.\\

\> 
\> $F_o$ \> stands for
\> $[\lambda x_o T_o] = [\lambda x_o x_o]$.\\

\> 
\> $[\Forall\textbf{x}_\alpha\textbf{A}_o]$ \> stands for
\> $[\lambda y_{\alpha} T_o] = [\lambda \textbf{x}_\alpha\textbf{A}_o]$.\\

\> 
\> $\wedge_{ooo}$ \> stands for
\> $[\lambda x_o \lambda y_o [[\lambda g_{ooo} [g_{ooo} T_o T_o]] = [\lambda g_{ooo} [g_{ooo} x_o y_o]]]]$.\\

\> 
\> $[\textbf{A}_o \And \textbf{B}_o]$ \> stands for
\> $[\wedge_{ooo}\textbf{A}_o\textbf{B}_o]$.\\

\> 
\> $\Implies_{ooo}$ \> stands for
\> $[\lambda x_o \lambda y_o [x_o = [x_o \And y_o]]]$.\\

\> 
\> $[\textbf{A}_o \Implies \textbf{B}_o]$ \> stands for
\> $[{\Implies_{ooo}}\textbf{A}_o\textbf{B}_o]$.\\

\> 
\> $\NegAlt_{oo}$ \> stands for
\> $[\mname{Q}_{ooo}F_o]$.\\

\> 
\> $[\NegAlt\textbf{A}_o]$ \> stands for
\> $[\NegAlt_{oo}\textbf{A}_o]$.\\

\> 
\> $\vee_{ooo}$ \> stands for
\> $[\lambda x_o \lambda y_o [\NegAlt[[\NegAlt x_o] \And [\NegAlt y_o]]]]$.\\

\> 
\> $[\textbf{A}_o \Or \textbf{B}_o]$ \> stands for
\> $[\vee_{ooo}\textbf{A}_o\textbf{B}_o]$.\\

\> 
\> $[\Forsome\textbf{x}_\alpha\textbf{A}_o]$ \> stands for
\> $[\NegAlt[\Forall\textbf{x}_\alpha\NegAlt\textbf{A}_o]]$.\\

\> 
\> $[\Forsome_1\textbf{x}_\alpha\textbf{A}_o]$ \> stands for
\> $[\Forsome y_\alpha
   [[\lambda\textbf{x}_\alpha\textbf{A}_o] = 
   \mname{Q}_{o\alpha\alpha}y_\alpha]]$\\
\> \> \> \> where $y_\alpha$ does not occur in $\textbf{A}_o$.\\

\> 
\> $[\textbf{A}_\alpha \not= \textbf{B}_\alpha]$ \> stands for 
\> $[\NegAlt[\textbf{A}_\alpha = \textbf{B}_\alpha]]$.\\

\> 
\> $[{\textbf{A}_\alpha\IsDefApp}]$ \> stands for
\> $[\Forsome x_\alpha [x_\alpha = \textbf{A}_\alpha]]$\\
\> \> \> \> where $x_\alpha$ does not occur in $\textbf{A}_\alpha$.\\

\> 
\> $[{\textbf{A}_\alpha\IsUndefApp}]$ \> stands for
\> $[\NegAlt[{\textbf{A}_\alpha\IsDefApp}]]$\\

\> 
\> $[\textbf{A}_\alpha \QuasiEqual \textbf{B}_\alpha]$ \> stands for 
\> $[{\textbf{A}_\alpha\IsDefApp} \Or {\textbf{B}_\alpha\IsDefApp}]
   \Implies [\textbf{A}_\alpha = \textbf{B}_\alpha]$.\\

\> 
\> $[\Iota\textbf{x}_\alpha\textbf{A}_o]$ \> stands for
\> $[\iota_{\alpha(o\alpha)}[\lambda\textbf{x}_\alpha\textbf{A}_o]]$.\\

\> 
\> $\Undefined_\alpha$ \> stands for
\> $[\Iota x_\alpha [x_\alpha \not= x_\alpha]]$ where $\alpha \not= o$.
\end{tabbing}

$[\Forsome_1\textbf{x}_\alpha\textbf{A}_o]$ asserts that there is a
unique $\textbf{x}_\alpha$ that satisfies $\textbf{A}_o$.

$[\Iota\textbf{x}_\alpha\textbf{A}_o]$ is called a \emph{definite
description}.  It denotes the unique $\textbf{x}_\alpha$ that
satisfies $\textbf{A}_o$.  If there is no or more than one such
$\textbf{x}_\alpha$, it is undefined.  Following Bertrand Russell and
Church, Andrews denotes this definite description operator as an
inverted lower case iota (\invertediota).  We represent this operator
by an (inverted) capital iota ($\Iota$).

$[{\textbf{A}_\alpha\IsDefApp}]$ says that $\textbf{A}_\alpha$ is
defined, and similarly, $[{\textbf{A}_\alpha\IsUndefApp}]$ says that
$\textbf{A}_\alpha$ is undefined.  $[\textbf{A}_\alpha \QuasiEqual
\textbf{B}_\alpha]$ says that $\textbf{A}_\alpha$ and
$\textbf{B}_\alpha$ are \emph{quasi-equal}, i.e., that
$\textbf{A}_\alpha$ and $\textbf{B}_\alpha$ are either both defined
and equal or both undefined.  $\Undefined_\alpha$ is a canonical
undefined wff of type $\alpha$.

\medskip
\begin{note}[Definedness Notation]
In {\qzero}, $[{\textbf{A}_\alpha\IsDefApp}]$ is always true,
$[{\textbf{A}_\alpha\IsUndefApp}]$ is always false, $[\textbf{A}_\alpha
\QuasiEqual \textbf{B}_\alpha]$ is always equal to $[\textbf{A}_\alpha
= \textbf{B}_\alpha]$, and $\Undefined_\alpha$ denotes an unspecified
value.\hfill$\blacksquare$
\end{note}
\medskip

\section{Proof System of {\qzerou}} \label{sec:proofsystem}

In this section we present the proof system of {\qzerou} which is
derived from the proof system of {\qzero}.  The issue of definedness
makes the proof system of {\qzerou} moderately more complicated than
the proof system for {\qzero}.  While {\qzero} has only five axiom
schemas and one rule of inference, {\qzerou} has the following
thirteen axiom schemas and two rules of inference:

\bi

  \item[] \textbf{A1 (Truth Values)}
  \[[g_{oo}T_o \And g_{oo}F_o] = \Forall x_o [g_{oo} x_o].\]

  \item[] \textbf{A2 (Leibniz' Law)} 
  \[[x_\alpha = y_\alpha] \Implies 
  [h_{o\alpha}x_\alpha = h_{o\alpha}y_\alpha].\]

  \item[] \textbf{A3 (Extensionality)}
  \[[f_{\alpha\beta} = g_{\alpha\beta}] = \Forall x_\beta 
  [f_{\alpha\beta}x_\beta \QuasiEqual g_{\alpha\beta}x_\beta].\]

  \item[] \textbf{A4 (Beta-Reduction)} 
  \[{\textbf{A}_\alpha \IsDefApp} \Implies 
  [[\lambda \textbf{x}_\alpha \textbf{B}_\beta]\textbf{A}_\alpha
    \QuasiEqual \mname{S}^{{\bf x}_\alpha}_{{\bf A}_\alpha}
    \textbf{B}_\beta]\] provided $\textbf{A}_\alpha$ is free for
  $\textbf{x}_\alpha$ in $\textbf{B}_\beta$.\footnote{$\mname{S}^{{\bf
        x}_\alpha}_{{\bf A}_\alpha}\textbf{B}_\beta$ is the result of
    substituting $\textbf{A}_\alpha$ for each free occurrence of
    $\textbf{x}_\alpha$ in $\textbf{B}_\beta$.}

  \medskip

  \item[] \textbf{A5 (Variables are Defined)} 
  \[{\textbf{x}_\alpha\IsDefApp}.\]

  \item[] \textbf{A6 (Primitive Constants are Defined)}
  \[{\textbf{c}_\alpha\IsDefApp} \dblsp 
  \mbox{where } \textbf{c}_\alpha \mbox{ is a primitive
    constant.}\footnotemark\]\footnotetext{Notice that
    $\textbf{c}_\alpha\IsDefApp$ is false if $\textbf{c}_\alpha$ is a
    defined constant $\Undefined_\alpha$ where $\alpha \not= o$.}

  \item[] \textbf{A7 (Function Abstractions are Defined)}
  \[{[\lambda \textbf{x}_\alpha \textbf{B}_\beta]\IsDefApp}.\]

  \item[] \textbf{A8 (Function Applications of Type $o$ are Defined)}
  \[{\textbf{A}_{o\beta}\textbf{B}_\beta \IsDefApp}.\]

  \item[] \textbf{A9 (Improper Function Application of Type $o$)}
  \[[{\textbf{A}_{o\beta}\IsUndefApp} \Or {\textbf{B}_\beta\IsUndefApp}]
  \Implies \NegAlt[\textbf{A}_{o\beta}\textbf{B}_\beta].\]

  \item[] \textbf{A10 (Improper Function Application of Type $\alpha \not= o$)}
  \[[{\textbf{A}_{\alpha\beta}\IsUndefApp} \Or {\textbf{B}_\beta\IsUndefApp}]
  \Implies {\textbf{A}_{\alpha\beta}\textbf{B}_\beta\IsUndefApp}
  \dblsp \mbox{where } \alpha \not= o.\]

  \item[] \textbf{A11 (Equality and Quasi-Quality)}
  \[{\textbf{A}_\alpha\IsDefApp} \Implies
  [{\textbf{B}_\alpha\IsDefApp} \Implies [[\textbf{A}_\alpha
  \QuasiEqual \textbf{B}_\alpha] \QuasiEqual [\textbf{A}_\alpha =
  \textbf{B}_\alpha]]].\]

  \item[] \textbf{A12 (Proper Definite Description)} 
  \[{\Forsome_1 \textbf{x}_\alpha \textbf{A}_o}
  \Implies [{[\Iota \textbf{x}_\alpha \textbf{A}_o]\IsDefApp} \And
  \mname{S}^{{\bf x}_\alpha}_{[{\rm I} {\bf x}_\alpha {\bf A}_o]}
  \textbf{A}_o] \dblsp \mbox{where }\alpha \not= o\] and provided $\Iota
  \textbf{x}_\alpha \textbf{A}_o$ is free for $\textbf{x}_\alpha$ in
  $\textbf{A}_o$.

  \medskip

  \item[] \textbf{A13 (Improper Definite Description)} 
  \[{\NegAlt[\Forsome_1 \textbf{x}_\alpha \textbf{A}_o]} \Implies 
  {[\Iota \textbf{x}_\alpha \textbf{A}_o]\IsUndefApp}
  \dblsp \mbox{where }\alpha \not= o.\]

  \item[] \textbf{R1 (Quasi-Equality Substitution)}\dblsp From
  $\textbf{A}_\alpha \QuasiEqual \textbf{B}_\alpha$ and $\textbf{C}_o$
  infer the result of replacing one occurrence of $\textbf{A}_\alpha$
  in $\textbf{C}_o$ by an occurrence of $\textbf{B}_\alpha$, provided
  that the occurrence of $\textbf{A}_\alpha$ in $\textbf{C}_o$ is not
  (an occurrence of a variable) immediately preceded by $\lambda$.

  \medskip

  \item[] \textbf{R2 (Modus Ponens)}\dblsp From $\textbf{A}_o$ and
  $\textbf{A}_o \Implies \textbf{B}_o$ infer $\textbf{B}_o$.

\ei

\medskip
\begin{note}[Axiom Schemas]
The axiom schemas A1, A2, A3, A4, and A12 of {\qzerou} correspond to
the five axiom schemas of {\qzero}.  A1 and A2 are exactly the same as
the first and second axiom schemas of {\qzero}.  A3 and A4 are
modifications of the third and fourth axiom schemas of {\qzero}.  A3
is the axiom of extensionality for partial and total functions, and A4
is beta-reduction for functions that may be partial and arguments that
may be undefined.

The seven axiom schemas A5--A11 of {\qzerou} deal with the definedness
of wffs.  A5 and A6 address the first principle of the traditional
approach to undefinedness, A10 addresses the second principle, and A8
and A9 address the third principle.  A7 states that a function
abstraction always denotes some function, either partial or total.
And A11 is a technical axiom schema for identifying equality with
quasi-equality when applied to defined wffs.

The last two axiom schemas of {\qzerou} state the
properties of definite descriptions.  A12 states that proper
definite descriptions are defined and denote the unique value
satisfying the description; it corresponds to the fifth axiom schema
of {\qzero}.  A13 states that improper definite descriptions
are undefined.  The proof system of {\qzero} leaves improper definite
descriptions unspecified.~$\blacksquare$
\end{note}
\medskip

\begin{note}[Rules of Inference]
{\qzerou}'s R1 rule of inference, Quasi-Equality Substitution,
corresponds to {\qzero}'s single rule of inference, which is equality
substitution.  These rules are exactly the same except that the
{\qzerou} rule requires only \emph{quasi-equality} ($\QuasiEqual$)
between the target wff and the substitution wff, while the {\qzero}
rule requires \emph{equality} (=).

{\qzerou}'s R2 rule of inference, Modus Ponens, is a primitive rule of
inference, but modus ponens is a derived rule of inference in
{\qzero}.  Modus ponens must be primitive in {\qzerou} since it is
needed to discharge the definedness conditions on instances of A4, the
schema for beta-reduction, and A11.\hfill$\blacksquare$
\end{note}
\medskip

A \emph{proof} of a $\wff{o}$ $\textbf{A}_o$ in {\qzerou} is a finite
sequence of $\wffs{o}$, ending with $\textbf{A}_o$, such that each
member in the sequence is an instance of an axiom schema of {\qzerou}
or is inferred from preceding members in the sequence by a rule of
inference of {\qzerou}.  A \emph{theorem} of {\qzerou} is a $\wff{o}$
for which there is a proof in {\qzerou}.

Let $\sH$ be a set of $\wffs{o}$.  A \emph{proof of $\textbf{A}_o$
from $\sH$} in {\qzerou} consists of two finite sequences $\sS_1$ and
$\sS_2$ of $\wffs{o}$ such that $\sS_1$ is a proof in {\qzerou},
$\textbf{A}_o$ is the last member of $\sS_2$, and each member
$\textbf{D}_o$ of $\sS_2$ satisfies at least one of the following
conditions:
\be
  
  \item $\textbf{D}_o \in \sH$.

  \item $\textbf{D}_o$ is a member of $\sS_1$ (and hence a theorem of
  {\qzerou}).

  \item $\textbf{D}_o$ is inferred from two preceding members
  $\textbf{A}_\alpha \QuasiEqual \textbf{B}_\alpha$ and $\textbf{C}_o$
  of $\sS_2$ by R1, provided that the occurrence of
  $\textbf{A}_\alpha$ in $\textbf{C}_o$ is not in a well-formed part
  $\lambda \textbf{x}_\beta \textbf{E}_\gamma$ of $\textbf{C}_o$ where
  $\textbf{x}_\beta$ is free in a member of $\sH$ and free in
  $\textbf{A}_\alpha \QuasiEqual \textbf{B}_\beta$.

  \item $\textbf{D}_o$ is inferred from two preceding members of
  $\sS_2$ by R2.

\ee
We write $\proves{\sH}{\textbf{A}_o}$ to mean there is a proof of
$\textbf{A}_o$ from $\sH$ in {\qzerou}.  $\proves{}{\textbf{A}_o}$
is written instead of $\proves{\emptyset}{\textbf{A}_o}$.  Clearly,
$\textbf{A}_o$ is a theorem of {\qzerou} \,{iff}\,
{$\proves{}{\textbf{A}_o}$}.

The next two theorems follow immediately from the definition above.

\medskip

\begin{theorem}[${\rm R1}'$]
If $\proves{\sH}{\textbf{A}_\alpha \QuasiEqual \textbf{B}_\alpha}$ and
$\proves{\sH}{\textbf{C}_o}$, then $\proves{\sH}{\textbf{D}_o}$, where
$\textbf{D}_o$ is the result of replacing one occurrence of
$\textbf{A}_\alpha$ in $\textbf{C}_o$ by an occurrence of
$\textbf{B}_\alpha$, provided that the occurrence of
$\textbf{A}_\alpha$ in $\textbf{C}_o$ is not immediately preceded by
$\lambda$ or in a well-formed part $\lambda \textbf{x}_\beta
\textbf{E}_\gamma$ of $\textbf{C}_o$ where $\textbf{x}_\beta$ is free
in a member of $\sH$ and free in $\textbf{A}_\alpha \QuasiEqual
\textbf{B}_\alpha$.
\end{theorem}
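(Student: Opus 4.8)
The plan is to read the result directly off the definition of a proof from $\sH$, since R1$'$ is in essence just the assertion that a single application of R1 may be appended to proofs already in hand. By hypothesis there is a proof of $\textbf{A}_\alpha \QuasiEqual \textbf{B}_\alpha$ from $\sH$, say given by the sequences $\sS_1'$ and $\sS_2'$, and a proof of $\textbf{C}_o$ from $\sH$, say given by the sequences $\sS_1''$ and $\sS_2''$. From these I would assemble a single pair of sequences witnessing $\proves{\sH}{\textbf{D}_o}$.

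First I would let $\sS_1$ be $\sS_1'$ followed by $\sS_1''$. This is again a proof in {\qzerou}: each member of $\sS_1''$ is an instance of an axiom schema or is inferred from members preceding it in $\sS_1''$, and every such member still precedes it once $\sS_1'$ is prefixed, so its justification is undisturbed. Next I would let $\sS_2$ be $\sS_2'$ followed by $\sS_2''$ followed by the single wff $\textbf{D}_o$. Every member inherited from $\sS_2'$ or $\sS_2''$ still satisfies one of the four clauses of the definition, because those clauses refer only to membership in $\sH$, to membership in $\sS_1$, or to preceding members of $\sS_2$, and each of these is preserved under concatenation; in particular, anything that was a member of $\sS_1'$ or $\sS_1''$ is a member of the combined $\sS_1$, so clause~2 continues to hold where it held before.

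It then remains only to justify the last member, $\textbf{D}_o$, by clause~3. Here $\textbf{D}_o$ is obtained by R1 from $\textbf{A}_\alpha \QuasiEqual \textbf{B}_\alpha$, the last member of $\sS_2'$, and from $\textbf{C}_o$, the last member of $\sS_2''$; both of these precede $\textbf{D}_o$ in the combined $\sS_2$. The one point genuinely requiring care is the side condition of clause~3, and this is exactly where the proviso of R1$'$ is used. The hypothesis that the replaced occurrence of $\textbf{A}_\alpha$ in $\textbf{C}_o$ is not immediately preceded by $\lambda$ subsumes the proviso built into R1 itself, while the hypothesis that this occurrence does not lie in a well-formed part $\lambda \textbf{x}_\beta \textbf{E}_\gamma$ with $\textbf{x}_\beta$ free in a member of $\sH$ and free in $\textbf{A}_\alpha \QuasiEqual \textbf{B}_\alpha$ is precisely the extra restriction that clause~3 imposes. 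I expect no real obstacle: the only substantive step is this matching of the theorem's proviso to the side condition of clause~3, and once it is observed the pair $\seq{\sS_1, \sS_2}$ is a proof of $\textbf{D}_o$ from $\sH$ by inspection.
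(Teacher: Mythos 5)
Your proposal is correct and is exactly the argument the paper intends: the paper states only that ${\rm R1}'$ ``follows immediately from the definition above,'' and your concatenation of the two witnessing pairs of sequences, with the final appeal to clause~3 and the matching of the theorem's proviso to R1's restriction and clause~3's side condition, is precisely that immediate verification spelled out.
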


\medskip

\begin{theorem}[${\rm R2}'$]
If $\proves{\sH}{\textbf{A}_o}$ and
$\proves{\sH}{\textbf{A}_o \Implies \textbf{B}_o}$, then
$\proves{\sH}{\textbf{B}_o}$.
\end{theorem}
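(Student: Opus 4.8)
The plan is to build a single proof of $\textbf{B}_o$ from $\sH$ by splicing the two given proofs together and finishing with one application of R2. By hypothesis there is a proof of $\textbf{A}_o$ from $\sH$ consisting of sequences $\sS_1$ and $\sS_2$, and a proof of $\textbf{A}_o \Implies \textbf{B}_o$ from $\sH$ consisting of sequences $\sS_1'$ and $\sS_2'$; here $\sS_1$ and $\sS_1'$ are proofs in {\qzerou}, $\textbf{A}_o$ is the last member of $\sS_2$, and $\textbf{A}_o \Implies \textbf{B}_o$ is the last member of $\sS_2'$.

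First I would form the sequence $\sS_1''$ obtained by writing $\sS_1$ followed by $\sS_1'$, and observe that $\sS_1''$ is again a proof in {\qzerou}. This is immediate: every member of $\sS_1''$ comes from $\sS_1$ or from $\sS_1'$, so it is either an instance of an axiom schema or is inferred from earlier members of its original sequence by R1 or R2; since those earlier members still occur earlier in $\sS_1''$, the defining condition for a proof is preserved under concatenation.

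Next I would form $\sS_2''$ by writing $\sS_2$, then $\sS_2'$, and finally appending $\textbf{B}_o$, and claim that $\seq{\sS_1'', \sS_2''}$ is a proof of $\textbf{B}_o$ from $\sH$. Each member inherited from $\sS_2$ still satisfies one of the four clauses of the definition: membership in $\sH$ (clause 1) is unaffected; a member of $\sS_1$ is still a member of $\sS_1''$ (clause 2); and any R1 or R2 inference (clauses 3 and 4) references members that still precede it in $\sS_2''$. The side condition in clause 3 mentions only variables free in members of $\sH$, and $\sH$ is unchanged, so it continues to hold. The same reasoning applies to each member inherited from $\sS_2'$. Finally, the appended wff $\textbf{B}_o$ is inferred by R2 from the two earlier members $\textbf{A}_o$ (the last member of $\sS_2$) and $\textbf{A}_o \Implies \textbf{B}_o$ (the last member of $\sS_2'$), so it satisfies clause 4.

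Since $\textbf{B}_o$ is the last member of $\sS_2''$, this exhibits $\proves{\sH}{\textbf{B}_o}$. The argument is pure bookkeeping; the only points requiring any care are checking that concatenation preserves the ``inferred from preceding members'' structure and that the clause 3 side condition survives, and the latter is immediate because the hypothesis set $\sH$ is held fixed throughout. I therefore expect no real obstacle, which matches the remark that the theorem follows immediately from the definition of proof from $\sH$.
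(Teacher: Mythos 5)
Your proposal is correct and is exactly the argument the paper intends when it says the theorem ``follows immediately from the definition above'': concatenate the two proofs from $\sH$ (both the $\sS_1$ parts and the $\sS_2$ parts) and append $\textbf{B}_o$ justified by one application of R2. You have merely written out the bookkeeping that the paper leaves implicit, and all the checks (preservation of the four clauses under concatenation, and the clause-3 side condition depending only on the fixed set $\sH$) go through as you state.
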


\section{Some Metatheorems} \label{sec:metatheorems}

In this section we prove some metatheorems of {\qzerou} that are
needed to prove the soundness and completeness of the proof system of
{\qzerou}.

\medskip

\begin{proposition}[Wffs of type $o$ are defined]\label{prop:odefined}
$\proves{}{{\textbf{A}_o\IsDefApp}}$ for all wffs $\textbf{A}_o$.
\end{proposition}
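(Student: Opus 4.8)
The plan is to prove the proposition by a direct case analysis on the outermost syntactic form of the $\wff{o}$ $\textbf{A}_o$, appealing only to the inductive definition of wffs together with the definedness axiom schemas A5, A6, and A8. No induction on the structure of $\textbf{A}_o$ is actually required, because the definedness of a type-$o$ wff depends only on its top-level form.

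By the inductive definition of wffs, every wff is a variable, a primitive constant, a function application $[\textbf{B}_{\alpha\beta}\textbf{C}_\beta]$, or a function abstraction $[\lambda\textbf{x}_\beta\textbf{B}_\alpha]$. The crucial observation is that a function abstraction always has a type of the form $\alpha\beta$ and can therefore never be a wff of type $o$; similarly, for a function application $[\textbf{B}_{\alpha\beta}\textbf{C}_\beta]$ to have type $o$ we must have $\alpha = o$, so it has the form $[\textbf{B}_{o\beta}\textbf{C}_\beta]$. Consequently $\textbf{A}_o$ falls into exactly one of three cases: it is a variable of type $o$, a primitive constant of type $o$, or a function application $[\textbf{B}_{o\beta}\textbf{C}_\beta]$.

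In each of these three cases the conclusion ${\textbf{A}_o\IsDefApp}$ is literally an instance of an axiom schema: A5 (Variables are Defined) when $\textbf{A}_o$ is a variable, A6 (Primitive Constants are Defined) when it is a primitive constant, and A8 (Function Applications of Type $o$ are Defined) when it is a function application $[\textbf{B}_{o\beta}\textbf{C}_\beta]$. A proof consisting of this single axiom instance therefore witnesses $\proves{}{{\textbf{A}_o\IsDefApp}}$, which completes the argument. I expect no genuine obstacle here; the only point requiring care is confirming that the restriction to type $o$ really does exclude function abstractions, which is exactly what lets A8 dispatch every compound wff of type $o$. This is also what distinguishes the present proposition from the situation at a type $\alpha \not= o$, where A10 permits function applications to be undefined and the analogous statement accordingly fails.
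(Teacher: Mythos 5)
Your proof is correct and takes the same route as the paper, whose entire proof is the one-line remark that the claim is ``directly implied by axiom schemas A5, A6, and A8''; you have simply made explicit the case analysis (variable, primitive constant, application of type $o$, with abstractions excluded on type grounds) that the paper leaves implicit.
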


\begin{proof}
Directly implied by axiom schemas A5, A6, and A8.
\end{proof}

\begin{theorem}[Beta-Reduction Rule]
If $\proves{\sH}{{\textbf{A}_\alpha\IsDefApp}}$ and
$\proves{\sH}{\textbf{C}_o}$, then $\proves{\sH}{\textbf{D}_o}$, where
$\textbf{D}_o$ is the result of replacing one occurrence of $[\lambda
\textbf{x}_\alpha \textbf{B}_\beta]\textbf{A}_\alpha$ in
$\textbf{C}_o$ by an occurrence of $\mname{S}^{{\bf x}_\alpha}_{{\bf
A}_\alpha} \textbf{B}_\beta$, provided $\textbf{A}_\alpha$ is free for
$\textbf{x}_\alpha$ in $\textbf{B}_\beta$ and the occurrence of
$[\lambda \textbf{x}_\alpha \textbf{B}_\beta]\textbf{A}_\alpha$ is not
in a well-formed part $\lambda \textbf{y}_\gamma \textbf{E}_\delta$ of
$\textbf{C}_o$ where $\textbf{y}_\gamma$ is free in a member of $\sH$
and free in $[\lambda \textbf{x}_\alpha
\textbf{B}_\beta]\textbf{A}_\alpha$.
\end{theorem}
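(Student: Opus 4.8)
The plan is to chain the axiom schema A4 with the two derived rules ${\rm R2}'$ and ${\rm R1}'$. First I would instantiate A4 (Beta-Reduction) at the wffs $\textbf{A}_\alpha$, $\textbf{B}_\beta$ and the variable $\textbf{x}_\alpha$ in question. The proviso of A4---that $\textbf{A}_\alpha$ be free for $\textbf{x}_\alpha$ in $\textbf{B}_\beta$---is exactly one of the hypotheses of the theorem, so this instance is legitimate. Since every instance of an axiom schema is a theorem, it is provable from $\sH$, giving
\[\proves{\sH}{{\textbf{A}_\alpha \IsDefApp} \Implies [[\lambda \textbf{x}_\alpha \textbf{B}_\beta]\textbf{A}_\alpha \QuasiEqual \mname{S}^{{\bf x}_\alpha}_{{\bf A}_\alpha} \textbf{B}_\beta]}.\]

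Next I would discharge the definedness antecedent. Using the hypothesis $\proves{\sH}{{\textbf{A}_\alpha\IsDefApp}}$ together with the implication just obtained, ${\rm R2}'$ (Modus Ponens) yields
\[\proves{\sH}{[\lambda \textbf{x}_\alpha \textbf{B}_\beta]\textbf{A}_\alpha \QuasiEqual \mname{S}^{{\bf x}_\alpha}_{{\bf A}_\alpha} \textbf{B}_\beta}.\]
Finally I would invoke ${\rm R1}'$ (Quasi-Equality Substitution) with this quasi-equality and the second hypothesis $\proves{\sH}{\textbf{C}_o}$, replacing the designated occurrence of $[\lambda \textbf{x}_\alpha \textbf{B}_\beta]\textbf{A}_\alpha$ in $\textbf{C}_o$ by $\mname{S}^{{\bf x}_\alpha}_{{\bf A}_\alpha} \textbf{B}_\beta$, to obtain $\proves{\sH}{\textbf{D}_o}$.

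The one point needing care is matching the side-condition of ${\rm R1}'$ to the side-condition stated in the theorem. ${\rm R1}'$ carries two provisos on the replaced occurrence: it must not be a variable immediately preceded by $\lambda$, and it must not lie in a well-formed part $\lambda \textbf{y}_\gamma \textbf{E}_\delta$ of $\textbf{C}_o$ in which $\textbf{y}_\gamma$ is free both in a member of $\sH$ and in the \emph{whole} quasi-equality $[\lambda \textbf{x}_\alpha \textbf{B}_\beta]\textbf{A}_\alpha \QuasiEqual \mname{S}^{{\bf x}_\alpha}_{{\bf A}_\alpha} \textbf{B}_\beta$. The first proviso is automatic, since the replaced wff is a function application whose leading symbol is a bracket and hence can never be immediately preceded by $\lambda$. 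The second is where the freeness bookkeeping enters: the theorem's proviso mentions only the free variables of $[\lambda \textbf{x}_\alpha \textbf{B}_\beta]\textbf{A}_\alpha$, whereas ${\rm R1}'$ mentions those of the entire quasi-equality.

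Here I would observe that the substitution introduces no new free variables. Because $\textbf{A}_\alpha$ is free for $\textbf{x}_\alpha$ in $\textbf{B}_\beta$, the free variables of $\mname{S}^{{\bf x}_\alpha}_{{\bf A}_\alpha} \textbf{B}_\beta$ are contained in the free variables of $\textbf{B}_\beta$ other than $\textbf{x}_\alpha$ together with the free variables of $\textbf{A}_\alpha$---which is precisely the set of free variables of $[\lambda \textbf{x}_\alpha \textbf{B}_\beta]\textbf{A}_\alpha$. Hence the free variables of the quasi-equality coincide with the free variables of $[\lambda \textbf{x}_\alpha \textbf{B}_\beta]\textbf{A}_\alpha$, so the theorem's proviso is exactly the instance of the ${\rm R1}'$ proviso required. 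This freeness check is the only real content of the argument; the remainder is the mechanical chaining of A4, ${\rm R2}'$, and ${\rm R1}'$.
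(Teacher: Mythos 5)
Your proposal is correct and follows exactly the route the paper takes: the paper's proof is the one-line remark that the theorem ``follows immediately from A4, R$1'$, and R$2'$,'' which is precisely your chain of instantiating A4, discharging its antecedent with R$2'$, and substituting via R$1'$. Your additional check that the free variables of the quasi-equality coincide with those of $[\lambda \textbf{x}_\alpha \textbf{B}_\beta]\textbf{A}_\alpha$ (so the theorem's proviso matches that of R$1'$) is a sound piece of bookkeeping the paper leaves implicit.
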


\begin{proof}
Follows immediately from A4, R$1'$, and R$2'$.
\end{proof}

\begin{lemma}\label{lem:1}
$\proves{}{\textbf{A}_\alpha \QuasiEqual \textbf{A}_\alpha}$.
\end{lemma}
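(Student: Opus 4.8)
The plan is to unfold the definition of quasi-equality and reduce the claim to a \emph{conditional} reflexivity of equality. By definition, $\textbf{A}_\alpha \QuasiEqual \textbf{A}_\alpha$ is the wff $[{\textbf{A}_\alpha\IsDefApp} \Or {\textbf{A}_\alpha\IsDefApp}] \Implies [\textbf{A}_\alpha = \textbf{A}_\alpha]$. Since the disjunction is merely an idempotent duplication of ${\textbf{A}_\alpha\IsDefApp}$, it should suffice to prove $\proves{}{{\textbf{A}_\alpha\IsDefApp} \Implies [\textbf{A}_\alpha = \textbf{A}_\alpha]}$, i.e., that a term equals itself whenever it is defined. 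This conditional form is unavoidable here: an unconditional $\textbf{A}_\alpha = \textbf{A}_\alpha$ is actually \emph{refutable} when $\textbf{A}_\alpha$ is undefined, since then $[\textbf{A}_\alpha = \textbf{A}_\alpha]$ is the application $[\mname{Q}_{o\alpha\alpha}\textbf{A}_\alpha]\textbf{A}_\alpha$ to an undefined argument and A9 forces it to be false; but the undefined case is precisely the one in which the antecedent $[{\textbf{A}_\alpha\IsDefApp} \Or {\textbf{A}_\alpha\IsDefApp}]$ fails, so there is nothing to prove there.

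For the heart of the matter I would obtain reflexivity from Leibniz' Law. Reading ${\textbf{A}_\alpha\IsDefApp}$ as its definiens $\Forsome x_\alpha[x_\alpha = \textbf{A}_\alpha]$, a witness satisfying $x_\alpha = \textbf{A}_\alpha$ yields $\textbf{A}_\alpha = \textbf{A}_\alpha$ by one use of A2 with the predicate $[\lambda z_\alpha[z_\alpha = \textbf{A}_\alpha]]$ in place of $h_{o\alpha}$: beta-reducing the two sides (via the Beta-Reduction Rule, discharging its definedness hypotheses by A5 for the variable $x_\alpha$ and by the standing assumption ${\textbf{A}_\alpha\IsDefApp}$ for $\textbf{A}_\alpha$) turns the conclusion of A2 into $[[x_\alpha = \textbf{A}_\alpha] = [\textbf{A}_\alpha = \textbf{A}_\alpha]]$, and feeding the witness through this type-$o$ equality by R$1'$ (after using A11 and Proposition~\ref{prop:odefined} to read it as a quasi-equality) delivers $\textbf{A}_\alpha = \textbf{A}_\alpha$. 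Reassembling the implication and moving back from $=$ to $\QuasiEqual$ on the type-$o$ wffs involved is then routine given A11, Proposition~\ref{prop:odefined}, and R$2'$.

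The main obstacle is not the Leibniz computation but the fact that this is the \emph{first} lemma: a general tautology theorem, universal instantiation (needed to put $[\lambda z_\alpha[z_\alpha = \textbf{A}_\alpha]]$ in for the object variable $h_{o\alpha}$ of A2), and existential elimination (needed to pass from ${\textbf{A}_\alpha\IsDefApp}$ to a single witness $x_\alpha = \textbf{A}_\alpha$) are all still unavailable, so each of these moves must be bootstrapped directly from the axioms and R$1'$/R$2'$ rather than invoked wholesale. I expect the cleanest route may in fact be to mirror Andrews' bootstrap proof of $\textbf{A}_\alpha = \textbf{A}_\alpha$ in {\qzero}---which establishes reflexivity before any instantiation machinery---and to guard each derived equality with a definedness hypothesis, converting equalities to quasi-equalities through A11 and absorbing the undefined case into the vacuously satisfied antecedent of $\QuasiEqual$. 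Verifying that every such guarded step still goes through, with its A2, A4, and A11 definedness side conditions met, is where the real care will be required.
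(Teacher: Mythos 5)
There is a genuine gap here: your plan rests on inference steps that are not available at this point in the development, and you defer rather than close that gap. Instantiating $h_{o\alpha}$ in A2 with $[\lambda z_\alpha[z_\alpha = \textbf{A}_\alpha]]$ requires Universal Instantiation, extracting a witness from $\Forsome x_\alpha[x_\alpha = \textbf{A}_\alpha]$ requires an existential-elimination rule the system does not have, and passing from ${\textbf{A}_\alpha\IsDefApp} \Implies [\textbf{A}_\alpha = \textbf{A}_\alpha]$ to $[{\textbf{A}_\alpha\IsDefApp} \Or {\textbf{A}_\alpha\IsDefApp}] \Implies [\textbf{A}_\alpha = \textbf{A}_\alpha]$ requires the Tautology Theorem. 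All of these are proved only later, and their proofs depend on this very lemma (the chain runs Lemma~\ref{lem:1} $\to$ Lemma~\ref{lem:2} $\to$ Corollary~\ref{cor:5200} $\to$ Lemmas~\ref{lem:5210} and~\ref{lem:5213} $\to$ the Tautology Theorem and Universal Instantiation), so ``bootstrapping'' them first is not an option: it is circular. Your closing paragraph correctly identifies this as the real difficulty, but a proof that ends by noting that verifying every guarded step ``is where the real care will be required'' has not discharged it.

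The idea you are missing is that no unfolding of $\QuasiEqual$ and no case split on definedness is needed. Pick a variable $\textbf{x}_\alpha$ not occurring in $\textbf{A}_\alpha$. Then ${\textbf{x}_\alpha\IsDefApp}$ is an instance of A5, and, since $\mname{S}^{{\bf x}_\alpha}_{{\bf x}_\alpha}\textbf{A}_\alpha$ is just $\textbf{A}_\alpha$, the instance of A4 with $\textbf{x}_\alpha$ in the argument position reads ${\textbf{x}_\alpha\IsDefApp} \Implies [[\lambda\textbf{x}_\alpha\textbf{A}_\alpha]\textbf{x}_\alpha \QuasiEqual \textbf{A}_\alpha]$. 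Applying R$2'$ yields $\proves{}{[\lambda\textbf{x}_\alpha\textbf{A}_\alpha]\textbf{x}_\alpha \QuasiEqual \textbf{A}_\alpha}$, and the Beta-Reduction Rule (already established before this lemma, with its definedness hypothesis again discharged by A5) rewrites the occurrences of the redex $[\lambda\textbf{x}_\alpha\textbf{A}_\alpha]\textbf{x}_\alpha$ to $\textbf{A}_\alpha$, giving $\proves{}{\textbf{A}_\alpha \QuasiEqual \textbf{A}_\alpha}$. This argument works uniformly whether or not $\textbf{A}_\alpha$ is defined, which is exactly why the lemma can be stated unconditionally for $\QuasiEqual$ even though, as you rightly observe, the corresponding statement for $=$ would fail for undefined $\textbf{A}_\alpha$.
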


\begin{proof}
Let $\textbf{x}_\alpha$ be a variable that does not occur in
$\textbf{A}_\alpha$.  Then ${\textbf{x}_\alpha\IsDefApp}$ is an
instance of A5, and ${\textbf{x}_\alpha\IsDefApp} \Implies [[\lambda
\textbf{x}_\alpha \textbf{A}_\alpha]\textbf{x}_\alpha \QuasiEqual
\textbf{A}_\alpha]$ is an instance of A4.  By applying R$2'$ to these
two wffs we obtain $\proves{}{[[\lambda \textbf{x}_\alpha
\textbf{A}_\alpha]\textbf{x}_\alpha \QuasiEqual \textbf{A}_\alpha]}$.
The conclusion of the lemma then follows by the Beta-Reduction Rule.
\end{proof}

\begin{lemma}\label{lem:2}
If $\proves{\sH}{{\textbf{A}_\alpha\IsDefApp}}$ and
$\proves{\sH}{{\textbf{B}_\alpha\IsDefApp}}$, then
$\proves{\sH}{\textbf{A}_\alpha \QuasiEqual \textbf{B}_\alpha}$
\,iff\, $\proves{\sH}{\textbf{A}_\alpha = \textbf{B}_\alpha}$.
\end{lemma}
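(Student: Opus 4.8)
The plan is to reduce both directions of the biconditional to the single quasi-equality supplied by A11 and then to move between the two type-$o$ wffs using R$1'$. First I would instantiate A11 at $\textbf{A}_\alpha$ and $\textbf{B}_\alpha$ and discharge its two definedness hypotheses with the assumed theorems $\proves{\sH}{{\textbf{A}_\alpha\IsDefApp}}$ and $\proves{\sH}{{\textbf{B}_\alpha\IsDefApp}}$ by two applications of R$2'$, obtaining $\proves{\sH}{[\textbf{A}_\alpha \QuasiEqual \textbf{B}_\alpha] \QuasiEqual [\textbf{A}_\alpha = \textbf{B}_\alpha]}$. Write $P$ for $[\textbf{A}_\alpha \QuasiEqual \textbf{B}_\alpha]$ and $E$ for $[\textbf{A}_\alpha = \textbf{B}_\alpha]$; both are wffs of type $o$, and this is the one nontrivial quasi-equality available, reflexivity (Lemma~\ref{lem:1}) being the only other source.

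For the forward direction, assume $\proves{\sH}{P}$. I would apply R$1'$ with the quasi-equality $P \QuasiEqual E$ just derived, taking the target wff $\textbf{C}_o$ to be $P$ itself and replacing its single, top-level occurrence of $P$ by $E$. Since that occurrence is the whole of $\textbf{C}_o$, it is neither immediately preceded by $\lambda$ nor buried in any abstraction, so the proviso of R$1'$ is met and we obtain $\proves{\sH}{E}$, i.e.\ $\proves{\sH}{\textbf{A}_\alpha = \textbf{B}_\alpha}$.

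The backward direction is where the real work lies, because R$1'$ rewrites an occurrence of the \emph{left} side of a quasi-equality by the right side, whereas here I start from $\proves{\sH}{E}$ and must produce $P$. The plan is first to manufacture the reversed quasi-equality $\proves{\sH}{E \QuasiEqual P}$ and then to transport: one further R$1'$ step using $E \QuasiEqual P$ with target $\textbf{C}_o := E$ replaces the whole of $E$ by $P$ and yields $\proves{\sH}{P}$. To obtain $E \QuasiEqual P$ I would start from the reflexivity theorem $\proves{\sH}{P \QuasiEqual P}$ of Lemma~\ref{lem:1} and rewrite two occurrences of $P$ by $E$ using $P \QuasiEqual E$. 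Unfolding $P \QuasiEqual P$ as $[{P\IsDefApp} \Or {P\IsDefApp}] \Implies [P = P]$, one R$1'$ step turns the $P$ inside the first ${P\IsDefApp}$ into $E$ and a second turns the left $P$ of the inner equation into $E$, leaving exactly $[{E\IsDefApp} \Or {P\IsDefApp}] \Implies [E = P]$, which is $E \QuasiEqual P$.

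The main obstacle, and the only delicate point, is this symmetry maneuver. The first of the two rewrites takes place inside the $\lambda$-abstraction hidden in the ${\IsDefApp}$ abbreviation (recall ${P\IsDefApp}$ abbreviates $\Forsome x_o[x_o = P]$), so I must choose the bound variable of that ${\IsDefApp}$ fresh for $\sH$ and for $P \QuasiEqual E$ in order to satisfy the freeness proviso of R$1'$; this is always possible, since the variable is bound. The second rewrite is unproblematic, as the relevant occurrence of $P$ sits in an argument position of the inner equality rather than under a binder. Everything else is a direct appeal to A11, R$1'$, R$2'$, and Lemma~\ref{lem:1}.
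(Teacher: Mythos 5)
Your proposal is correct and follows essentially the same route as the paper's own proof: both directions are obtained from the A11 instance $[\textbf{A}_\alpha \QuasiEqual \textbf{B}_\alpha] \QuasiEqual [\textbf{A}_\alpha = \textbf{B}_\alpha]$ discharged via R$2'$, with the backward direction handled by using Lemma~\ref{lem:1}'s reflexive quasi-equality and R$1'$ to reverse the quasi-equality before rewriting. Your treatment is in fact somewhat more careful than the paper's, since you make explicit the two separate R$1'$ rewrites needed to turn $[P \QuasiEqual P]$ into $[E \QuasiEqual P]$ under the unfolded abbreviations and the freshness condition on the bound variable of the $\IsDefApp$ part.
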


\begin{proof}

($\Rightarrow$): Follows immediately from A11, R$1'$, and R$2'$.

($\Leftarrow$): $\proves{\sH}{[\textbf{A}_\alpha \QuasiEqual
\textbf{B}_\alpha] \QuasiEqual [\textbf{A}_\alpha =
\textbf{B}_\alpha]}$ by the first two hypotheses, A11, and R$2'$.
$\proves{}{[\textbf{A}_\alpha \QuasiEqual \textbf{B}_\alpha]
\QuasiEqual [\textbf{A}_\alpha \QuasiEqual \textbf{B}_\alpha]}$ by
Lemma~\ref{lem:1}.  We obtain $\proves{\sH}{[\textbf{A}_\alpha =
\textbf{B}_\alpha] \QuasiEqual [\textbf{A}_\alpha \QuasiEqual
\textbf{B}_\alpha]}$ by applying R$1'$ to these two statements.  The
conclusion of the lemma then follows by applying R$1'$ to this last
statement and $\proves{\sH}{\textbf{A}_\alpha = \textbf{B}_\alpha}$.
\end{proof}

\begin{corollary}\label{cor:5200}
If $\proves{}{{\textbf{A}_\alpha\IsDefApp}}$, then
$\proves{}{\textbf{A}_\alpha = \textbf{A}_\alpha}$.
\end{corollary}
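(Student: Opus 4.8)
\noindent The plan is to derive this as an immediate specialization of Lemma~\ref{lem:2}. First I would set $\sH = \emptyset$ and take both wffs occurring in Lemma~\ref{lem:2} to be the single wff $\textbf{A}_\alpha$; that is, I would instantiate the $\textbf{B}_\alpha$ of that lemma by $\textbf{A}_\alpha$ itself. Under this instantiation the two definedness premises of Lemma~\ref{lem:2}, namely $\proves{}{{\textbf{A}_\alpha\IsDefApp}}$ and $\proves{}{{\textbf{B}_\alpha\IsDefApp}}$, both collapse to the single hypothesis $\proves{}{{\textbf{A}_\alpha\IsDefApp}}$ of the corollary. Thus the one assumption we are given discharges both premises, and the biconditional of Lemma~\ref{lem:2} becomes available for the pair $\textbf{A}_\alpha, \textbf{A}_\alpha$.

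\noindent Next I would supply the quasi-equality needed to feed that biconditional. Lemma~\ref{lem:1} gives $\proves{}{\textbf{A}_\alpha \QuasiEqual \textbf{A}_\alpha}$ unconditionally, so in particular it holds with the empty hypothesis set. Applying the forward ($\Rightarrow$) direction of Lemma~\ref{lem:2} to this quasi-equality then yields the desired $\proves{}{\textbf{A}_\alpha = \textbf{A}_\alpha}$. There is essentially no obstacle here: the corollary is just the reflexivity of $\QuasiEqual$ from Lemma~\ref{lem:1} transported across the definedness-guarded equivalence of $\QuasiEqual$ and $=$ furnished by Lemma~\ref{lem:2}. The only point worth flagging is the double use of the single hypothesis to meet both definedness premises of Lemma~\ref{lem:2}.
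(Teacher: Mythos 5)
Your proposal is correct and follows exactly the route the paper takes: the paper's own proof simply cites Lemmas~\ref{lem:1} and~\ref{lem:2}, which amounts to the instantiation $\textbf{B}_\alpha := \textbf{A}_\alpha$ and the forward direction of Lemma~\ref{lem:2} applied to the reflexivity of $\QuasiEqual$ that you describe. Your write-up just spells out the details the paper leaves implicit.
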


\begin{proof}
By Lemmas~\ref{lem:1} and~\ref{lem:2}.
\end{proof}

\begin{lemma}\label{lem:3}
If $\proves{}{{\textbf{A}_\alpha\IsDefApp}}$ and
$\proves{}{\textbf{B}_\beta \QuasiEqual \textbf{C}_\beta}$, then
$\proves{}\mname{S}^{{\bf x}_\alpha}_{{\bf A}_\alpha}[\textbf{B}_\beta
\QuasiEqual \textbf{C}_\beta]$, provided $\textbf{A}_\alpha$ is free
for $\textbf{x}_\alpha$ in $\textbf{B}_\beta \QuasiEqual
\textbf{C}_\beta$.
\end{lemma}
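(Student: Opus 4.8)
The target $\mname{S}^{{\bf x}_\alpha}_{{\bf A}_\alpha}[\textbf{B}_\beta \QuasiEqual \textbf{C}_\beta]$ is, once the abbreviation $\QuasiEqual$ is expanded and the substitution is pushed inward, literally the wff $[\mname{S}^{{\bf x}_\alpha}_{{\bf A}_\alpha}\textbf{B}_\beta \QuasiEqual \mname{S}^{{\bf x}_\alpha}_{{\bf A}_\alpha}\textbf{C}_\beta]$; the hypothesis that $\textbf{A}_\alpha$ is free for $\textbf{x}_\alpha$ in $\textbf{B}_\beta \QuasiEqual \textbf{C}_\beta$ is exactly what guarantees that $\mname{S}^{{\bf x}_\alpha}_{{\bf A}_\alpha}$ distributes over the defined connectives (and the fresh bound variables hidden in each $\IsDefApp$) without capture. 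So the plan is to prove $\proves{}{\mname{S}^{{\bf x}_\alpha}_{{\bf A}_\alpha}\textbf{B}_\beta \QuasiEqual \mname{S}^{{\bf x}_\alpha}_{{\bf A}_\alpha}\textbf{C}_\beta}$ in three moves: manufacture a quasi-equality between the two beta-redexes $[\lambda \textbf{x}_\alpha \textbf{B}_\beta]\textbf{A}_\alpha$ and $[\lambda \textbf{x}_\alpha \textbf{C}_\beta]\textbf{A}_\alpha$, and then collapse each redex to its reduct. Crucially, I would \emph{not} try to route through $\proves{}{[\lambda \textbf{x}_\alpha (\textbf{B}_\beta \QuasiEqual \textbf{C}_\beta)]\textbf{A}_\alpha}$, since extracting that from $\proves{}{\textbf{B}_\beta \QuasiEqual \textbf{C}_\beta}$ would require a generalization principle not yet available at this point.

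First I would take the reflexivity instance $\proves{}{[\lambda \textbf{x}_\alpha \textbf{B}_\beta]\textbf{A}_\alpha \QuasiEqual [\lambda \textbf{x}_\alpha \textbf{B}_\beta]\textbf{A}_\alpha}$ from Lemma~\ref{lem:1}. Using the hypothesis $\proves{}{\textbf{B}_\beta \QuasiEqual \textbf{C}_\beta}$ together with R$1'$, I would rewrite $\textbf{B}_\beta$ to $\textbf{C}_\beta$ at each occurrence belonging to the \emph{second} operand, yielding $\proves{}{[\lambda \textbf{x}_\alpha \textbf{B}_\beta]\textbf{A}_\alpha \QuasiEqual [\lambda \textbf{x}_\alpha \textbf{C}_\beta]\textbf{A}_\alpha}$. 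Each such occurrence sits in the body of a $\lambda \textbf{x}_\alpha$ and so is not immediately preceded by $\lambda$; and because the deduction is from the empty hypothesis set, the R$1'$ side condition that forbids substitution inside a $\lambda \textbf{x}_\beta \textbf{E}_\gamma$ whose bound variable is free in a member of $\sH$ is vacuous. This is precisely where the theorem-level form of R$1'$ is stronger than its hypothesis-level form: since $\textbf{B}_\beta \QuasiEqual \textbf{C}_\beta$ holds under every assignment, the replacement stays valid even though $\lambda \textbf{x}_\alpha$ captures the free occurrences of $\textbf{x}_\alpha$ in $\textbf{B}_\beta$ and $\textbf{C}_\beta$.

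Finally I would apply the Beta-Reduction Rule to the wff just obtained, discharging each definedness premise with the hypothesis $\proves{}{{\textbf{A}_\alpha\IsDefApp}}$: the applications rewrite every occurrence of $[\lambda \textbf{x}_\alpha \textbf{B}_\beta]\textbf{A}_\alpha$ to $\mname{S}^{{\bf x}_\alpha}_{{\bf A}_\alpha}\textbf{B}_\beta$ and every occurrence of $[\lambda \textbf{x}_\alpha \textbf{C}_\beta]\textbf{A}_\alpha$ to $\mname{S}^{{\bf x}_\alpha}_{{\bf A}_\alpha}\textbf{C}_\beta$. The freeness provisos are inherited from the hypothesis that $\textbf{A}_\alpha$ is free for $\textbf{x}_\alpha$ in $\textbf{B}_\beta \QuasiEqual \textbf{C}_\beta$, and the $\lambda$-scope side conditions are again vacuous over the empty hypothesis set. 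The result is $\proves{}{\mname{S}^{{\bf x}_\alpha}_{{\bf A}_\alpha}\textbf{B}_\beta \QuasiEqual \mname{S}^{{\bf x}_\alpha}_{{\bf A}_\alpha}\textbf{C}_\beta}$, the desired conclusion.

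The delicate step is the middle one: I must be certain that replacing $\textbf{B}_\beta$ by $\textbf{C}_\beta$ underneath $\lambda \textbf{x}_\alpha$ is genuinely licensed, i.e. that capturing $\textbf{x}_\alpha$ does no harm here. This is exactly why the lemma is stated with $\proves{}{}$ rather than $\proves{\sH}{}$, and it is the feature that lets the whole argument avoid any appeal to universal generalization. The only remaining point, which is routine, is careful bookkeeping: because $\QuasiEqual$ (and the $\IsDefApp$ it unfolds to) mentions each operand in more than one syntactic slot, both the R$1'$ rewrite and the beta-reductions must be carried out at the finitely many occurrences of each subwff, and the free-for hypothesis on $\textbf{A}_\alpha$ is what certifies that $\mname{S}^{{\bf x}_\alpha}_{{\bf A}_\alpha}$ commutes with the abbreviation $\QuasiEqual$.
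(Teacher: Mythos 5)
Your proposal is correct and follows essentially the same route as the paper, which simply points to Andrews' proof of theorem 5209: an instance of Lemma~\ref{lem:1} giving $\proves{}{[\lambda \textbf{x}_\alpha \textbf{B}_\beta]\textbf{A}_\alpha \QuasiEqual [\lambda \textbf{x}_\alpha \textbf{B}_\beta]\textbf{A}_\alpha}$, rewriting under the $\lambda$ via R$1'$ (whose capture side condition is vacuous over the empty hypothesis set), and then collapsing the redexes with the Beta-Reduction Rule using $\proves{}{{\textbf{A}_\alpha\IsDefApp}}$. You have in fact supplied more detail than the paper does, including the correct observations that the rewrites must be performed at each of the finitely many occurrences hidden in the expansion of $\QuasiEqual$ and that the empty hypothesis set is what makes the whole argument go through.
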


\begin{proof}
Follows from Lemma~\ref{lem:1} and the Beta-Reduction Rule in a way
that is similar to the proof of theorem 5209 in~\cite{Andrews02}.
\end{proof}

\begin{corollary}\label{cor:5209}
If $\proves{}{{\textbf{A}_\alpha\IsDefApp}}$ and $\proves{}{\textbf{B}_o
= \textbf{C}_o}$, then $\proves{}\mname{S}^{{\bf x}_\alpha}_{{\bf
A}_\alpha}[\textbf{B}_o = \textbf{C}_o]$, provided $\textbf{A}_\alpha$
is free for $\textbf{x}_\alpha$ in $\textbf{B}_o = \textbf{C}_o$.
\end{corollary}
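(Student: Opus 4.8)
The plan is to reduce the equality statement to the quasi-equality version already established in Lemma~\ref{lem:3}, using Lemma~\ref{lem:2} to move between $=$ and $\QuasiEqual$ on wffs of type $o$, all of which are provably defined by Proposition~\ref{prop:odefined}. The logical skeleton is thus: convert the hypothesis $\proves{}{\textbf{B}_o = \textbf{C}_o}$ into a quasi-equality, push the substitution through via Lemma~\ref{lem:3}, and convert back to an equality.

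First I would note that $\textbf{B}_o$ and $\textbf{C}_o$, being of type $o$, satisfy $\proves{}{{\textbf{B}_o\IsDefApp}}$ and $\proves{}{{\textbf{C}_o\IsDefApp}}$ by Proposition~\ref{prop:odefined}. Hence the ($\Leftarrow$) direction of Lemma~\ref{lem:2}, applied to the hypothesis $\proves{}{\textbf{B}_o = \textbf{C}_o}$, yields $\proves{}{\textbf{B}_o \QuasiEqual \textbf{C}_o}$. I would then apply Lemma~\ref{lem:3} to this together with the first hypothesis $\proves{}{{\textbf{A}_\alpha\IsDefApp}}$, obtaining $\proves{}{\mname{S}^{{\bf x}_\alpha}_{{\bf A}_\alpha}[\textbf{B}_o \QuasiEqual \textbf{C}_o]}$. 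Since substitution distributes over the abbreviation $\QuasiEqual$, this wff is $[{\mname{S}^{{\bf x}_\alpha}_{{\bf A}_\alpha}\textbf{B}_o} \QuasiEqual {\mname{S}^{{\bf x}_\alpha}_{{\bf A}_\alpha}\textbf{C}_o}]$. The two substituted wffs are again of type $o$, hence provably defined by Proposition~\ref{prop:odefined}, so the ($\Rightarrow$) direction of Lemma~\ref{lem:2} converts the quasi-equality back to an equality, giving $\proves{}{[{\mname{S}^{{\bf x}_\alpha}_{{\bf A}_\alpha}\textbf{B}_o} = {\mname{S}^{{\bf x}_\alpha}_{{\bf A}_\alpha}\textbf{C}_o}]}$, which is exactly $\proves{}{\mname{S}^{{\bf x}_\alpha}_{{\bf A}_\alpha}[\textbf{B}_o = \textbf{C}_o]}$, as required.

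The one point needing care—and what I expect to be the main obstacle—is the transfer of the substitution proviso. To invoke Lemma~\ref{lem:3} I need $\textbf{A}_\alpha$ to be free for $\textbf{x}_\alpha$ in $\textbf{B}_o \QuasiEqual \textbf{C}_o$, whereas the hypothesis only guarantees this for $\textbf{B}_o = \textbf{C}_o$. Because $\QuasiEqual$ expands through the definedness abbreviation $\IsDefApp$, whose bound variables may be chosen fresh (in particular, not occurring in $\textbf{A}_\alpha$), the stated hypothesis suffices. The substance of the argument is therefore just this bookkeeping about the abbreviations: confirming that $\mname{S}^{{\bf x}_\alpha}_{{\bf A}_\alpha}$ commutes with the definitions of $\QuasiEqual$ and $=$ and that no variable capture is introduced. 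Everything else is a direct chaining of Proposition~\ref{prop:odefined}, Lemma~\ref{lem:2}, and Lemma~\ref{lem:3}.
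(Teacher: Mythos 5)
Your proposal is correct and follows exactly the route the paper intends: the paper's proof is simply the citation ``By Proposition~\ref{prop:odefined}, Lemma~\ref{lem:2}, and Lemma~\ref{lem:3},'' and your argument is the natural unpacking of that citation (convert $=$ to $\QuasiEqual$ via Lemma~\ref{lem:2} using Proposition~\ref{prop:odefined}, push the substitution through with Lemma~\ref{lem:3}, convert back). Your extra care about the free-for proviso transferring across the $\QuasiEqual$ abbreviation is sound bookkeeping that the paper leaves implicit.
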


\begin{proof}
By Proposition~\ref{prop:odefined}, Lemma~\ref{lem:2}, and
Lemma~\ref{lem:3}.
\end{proof}

\begin{lemma}\label{lem:5210}
If $\proves{}{{\textbf{B}_\beta\IsDefApp}}$, then $\proves{}{T_o =
[\textbf{B}_\beta = \textbf{B}_\beta]}$.
\end{lemma}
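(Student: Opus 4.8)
The plan is to prove the statement first for the special case in which $\textbf{B}_\beta$ is a variable and then to transfer it to an arbitrary defined $\textbf{B}_\beta$ by a single substitution. Recall that $T_o$ abbreviates $[\mname{Q}_{ooo} = \mname{Q}_{ooo}]$, so $T_o$ contains no variables and no occurrence of $\lambda$; in particular $\proves{}{T_o}$ holds, since $\mname{Q}_{ooo}$ is defined by A6 and so $\proves{}{\mname{Q}_{ooo} = \mname{Q}_{ooo}}$ by Corollary~\ref{cor:5200}.

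First I would fix a variable $\textbf{x}_\beta$ not occurring in $\textbf{B}_\beta$ and establish the base case $\proves{}{T_o = [\textbf{x}_\beta = \textbf{x}_\beta]}$. Here every term in sight is a variable or a primitive constant, so each definedness side-condition is discharged at once by A5 and A6, and the argument can follow Andrews' proof of theorem 5210. Concretely, $\proves{}{\textbf{x}_\beta = \textbf{x}_\beta}$ by A5 and Corollary~\ref{cor:5200}; universally generalizing on $\textbf{x}_\beta$ gives $\proves{}{\Forall \textbf{x}_\beta [\textbf{x}_\beta = \textbf{x}_\beta]}$, which, unfolding the abbreviation for $\Forall$, is $\proves{}{[\lambda y_\beta T_o] = [\lambda \textbf{x}_\beta [\textbf{x}_\beta = \textbf{x}_\beta]]}$. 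Applying Leibniz' Law (A2) to this equation with the predicate $[\lambda f_{o\beta}[f_{o\beta}\textbf{x}_\beta]]$ that sends a function to its value at $\textbf{x}_\beta$, discharging the resulting implication by R2$'$, and then beta-reducing both applications (legitimate because $\textbf{x}_\beta$ is defined by A5 and the two abstractions are defined by A7) collapses the two sides to $T_o$ and $[\textbf{x}_\beta = \textbf{x}_\beta]$, yielding $\proves{}{T_o = [\textbf{x}_\beta = \textbf{x}_\beta]}$.

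With the base case in hand I would finish by invoking Corollary~\ref{cor:5209}. Since $\proves{}{\textbf{B}_\beta \IsDefApp}$ by hypothesis and $\proves{}{T_o = [\textbf{x}_\beta = \textbf{x}_\beta]}$ is an equation between $\wffs{o}$, Corollary~\ref{cor:5209} yields $\proves{}{\mname{S}^{\textbf{x}_\beta}_{\textbf{B}_\beta}[T_o = [\textbf{x}_\beta = \textbf{x}_\beta]]}$. The substitution is unproblematic: $[T_o = [\textbf{x}_\beta = \textbf{x}_\beta]]$ contains no $\lambda$, so $\textbf{B}_\beta$ is trivially free for $\textbf{x}_\beta$, and since $\textbf{x}_\beta$ does not occur in $T_o$ the result of the substitution is exactly $[T_o = [\textbf{B}_\beta = \textbf{B}_\beta]]$, the desired conclusion.

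The main obstacle is the base case, and specifically the identification of the reflexivity wff $[\textbf{x}_\beta = \textbf{x}_\beta]$ with $T_o$: this is the one place where genuine propositional content enters — via universal generalization of reflexivity together with the definition of $\Forall$ — rather than mere definedness bookkeeping. Everything peculiar to {\qzerou}, namely the definedness side-conditions newly attached to beta-reduction (A4) and to substitution (Corollary~\ref{cor:5209}), is routine here, since the base case involves only variables and primitive constants (so A5, A6, and A7 apply) while the lifting step consumes precisely the hypothesis $\proves{}{\textbf{B}_\beta \IsDefApp}$ that Corollary~\ref{cor:5209} demands.
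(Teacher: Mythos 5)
Your overall shape---establish the result for a fresh variable $\textbf{x}_\beta$ and then transfer it to $\textbf{B}_\beta$ via Corollary~\ref{cor:5209} using the hypothesis $\proves{}{{\textbf{B}_\beta\IsDefApp}}$---is right, and the final substitution step is exactly what the paper does. The gap is in the base case. You obtain $\proves{}{\Forall \textbf{x}_\beta[\textbf{x}_\beta = \textbf{x}_\beta]}$ by Universal Generalization, but in this development Universal Generalization is Theorem~\ref{thm:gen}, proved \emph{after} Lemma~\ref{lem:5210}, and by the enumerated list preceding Theorem~\ref{thm:5215} its proof uses Lemma~\ref{lem:5210} (just as Andrews' 5220 rests on Rule T, i.e.\ on 5219 and 5218, whose proofs rest on 5210). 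So the argument is circular; and there is no primitive-level escape, since passing from $\proves{}{\textbf{x}_\beta = \textbf{x}_\beta}$ to $\proves{}{\Forall\textbf{x}_\beta[\textbf{x}_\beta = \textbf{x}_\beta]}$ by rewriting $T_o$ under the $\lambda$ in $[\lambda \textbf{x}_\beta T_o] = [\lambda \textbf{x}_\beta T_o]$ requires precisely the equation $T_o = [\textbf{x}_\beta = \textbf{x}_\beta]$ you are trying to prove. The Leibniz step has a related difficulty: A2 is an axiom whose free variables $x_\alpha$, $y_\alpha$, $h_{o\alpha}$ you instantiate with compound wffs such as $[\lambda f_{o\beta}[f_{o\beta}\textbf{x}_\beta]]$, but the machinery licensing such instantiation (Universal Instantiation, Theorem~\ref{thm:5215}) is likewise not yet available, and Corollary~\ref{cor:5209} substitutes only into proven \emph{equations}, which A2 is not.

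The paper avoids both problems by following Andrews' own proof of 5210, which uses no form of Gen: the needed wff $[\lambda y_\beta T_o] = [\lambda \textbf{x}_\beta[\cdots]]$ is extracted from axiom A3 (extensionality) instantiated at a reflexive equation between function abstractions (via Corollary~\ref{cor:5209} and A7), and then collapsed with the Beta-Reduction Rule and R$1'$. Because A3 in {\qzerou} is stated with $\QuasiEqual$ rather than $=$, this route first yields only $\proves{}{T_o = [\textbf{B}_\beta \QuasiEqual \textbf{B}_\beta]}$, and the paper must then invoke A11 together with R1 and R2 (and the hypothesis $\proves{}{{\textbf{B}_\beta\IsDefApp}}$) to replace the inner quasi-equality by a genuine equality. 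Your proposal never confronts this quasi-equality step, which is exactly where Lemma~\ref{lem:5210} departs from Andrews' 5210; a corrected proof needs both the A3 route for the base case and the A11 conversion at the end.
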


\begin{proof}
The proof of $\proves{}{T_o = [\textbf{B}_\beta \QuasiEqual
\textbf{B}_\beta]}$ is similar to the proof of theorem 5210
in~\cite{Andrews02} with Corollaries~\ref{cor:5200} and~\ref{cor:5209}
used in place of theorems 5200 and 5209, respectively.  The lemma then
follows from A11, R1, and R2.
\end{proof}

\begin{lemma}\label{lem:5213}
If $\proves{}{\textbf{A}_o = \textbf{B}_o}$ and
$\proves{}{\textbf{C}_o = \textbf{D}_o}$, then
$\proves{}{[\textbf{A}_o = \textbf{B}_o] \And [\textbf{C}_o =
\textbf{D}_o]}$.
\end{lemma}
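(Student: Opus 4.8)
The plan is to prove the single base theorem $\proves{}{T_o \And T_o}$ and then transform it into the desired conjunction by two substitutions. The key observation is that $[\textbf{A}_o \And \textbf{B}_o]$ abbreviates $\wedge_{ooo}\textbf{A}_o\textbf{B}_o$, so if I can replace the two \emph{argument} occurrences of $T_o$ in $\wedge_{ooo}T_oT_o$ by $[\textbf{A}_o = \textbf{B}_o]$ and $[\textbf{C}_o = \textbf{D}_o]$ respectively, the resulting wff is literally $\wedge_{ooo}[\textbf{A}_o = \textbf{B}_o][\textbf{C}_o = \textbf{D}_o]$, i.e. $[[\textbf{A}_o = \textbf{B}_o] \And [\textbf{C}_o = \textbf{D}_o]]$. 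Working at the level of this abbreviation means no beta-conversion is needed at the end; the only beta-conversion in the argument is confined to establishing the base theorem.

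To license those two substitutions I first derive $\proves{}{T_o \QuasiEqual [\textbf{A}_o = \textbf{B}_o]}$ and $\proves{}{T_o \QuasiEqual [\textbf{C}_o = \textbf{D}_o]}$. Since $\textbf{A}_o$ is of type $o$ it is defined (Proposition~\ref{prop:odefined}), so Lemma~\ref{lem:5210} gives $\proves{}{T_o = [\textbf{A}_o = \textbf{A}_o]}$. The hypothesis $\proves{}{\textbf{A}_o = \textbf{B}_o}$ together with Lemma~\ref{lem:2} (both sides are defined) yields $\proves{}{\textbf{A}_o \QuasiEqual \textbf{B}_o}$, and R$1'$ then replaces the rightmost occurrence of $\textbf{A}_o$ to give $\proves{}{T_o = [\textbf{A}_o = \textbf{B}_o]}$; a final appeal to Lemma~\ref{lem:2} converts this equation into the quasi-equality. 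The second hypothesis is handled identically. Because $\sH = \emptyset$, every free-variable side condition on R$1'$ is vacuous, so the only care required is to aim the two final substitutions at the two argument occurrences of $T_o$ and not at the occurrences of $T_o$ that sit inside the definiens of $\wedge_{ooo}$.

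I expect the base theorem $\proves{}{T_o \And T_o}$ to be the main obstacle. In {\qzero} it is immediate, since beta-conversion there is an unconditional, symmetric equality; in {\qzerou}, A4 supplies only the conditional \emph{quasi}-equality, so rather than beta-expanding I would contract a reflexive equation. The wff $\wedge_{ooo}T_oT_o$ is a function application of type $o$ and hence defined by A8, so Corollary~\ref{cor:5200} gives $\proves{}{\wedge_{ooo}T_oT_o = \wedge_{ooo}T_oT_o}$. Applying the Beta-Reduction Rule twice to the right-hand side---each time the argument is $T_o$, whose definedness (Proposition~\ref{prop:odefined}) discharges the antecedent of A4 via R$2'$---rewrites that side to the reflexivity instance $\textbf{R} := [[\lambda g_{ooo}[g_{ooo}T_oT_o]] = [\lambda g_{ooo}[g_{ooo}T_oT_o]]]$, so $\proves{}{\wedge_{ooo}T_oT_o = \textbf{R}}$. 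The wff $\textbf{R}$ is itself a theorem, since $[\lambda g_{ooo}[g_{ooo}T_oT_o]]$ is defined by A7 and Corollary~\ref{cor:5200} proves it equal to itself. It remains to pass from $\proves{}{\wedge_{ooo}T_oT_o = \textbf{R}}$ and $\proves{}{\textbf{R}}$ to $\proves{}{\wedge_{ooo}T_oT_o}$: both wffs are of type $o$ hence defined, so Lemma~\ref{lem:2} lets me move between $=$ and $\QuasiEqual$, and symmetry of equality on defined wffs---obtained by one application of R$1'$ to a reflexive equation---gives $\proves{}{\textbf{R} \QuasiEqual \wedge_{ooo}T_oT_o}$; a last use of R$1'$ replacing $\textbf{R}$ by $\wedge_{ooo}T_oT_o$ in the theorem $\textbf{R}$ delivers the base theorem. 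The whole argument is thus the {\qzerou} analog of Andrews' route through reflexivity, $T_o \And T_o$, and conjunction, with equality replaced throughout by quasi-equality and with A8 providing exactly the definedness that makes the contraction strategy work.
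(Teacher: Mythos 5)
Your proof is correct and is essentially the paper's own argument: the paper simply defers to Andrews' proof of theorem 5213 (with Lemma~\ref{lem:5210} in place of 5210), which is exactly your route of deriving $\proves{}{T_o \QuasiEqual [\textbf{A}_o = \textbf{B}_o]}$ and $\proves{}{T_o \QuasiEqual [\textbf{C}_o = \textbf{D}_o]}$ from Lemma~\ref{lem:5210}, Lemma~\ref{lem:2}, and R$1'$, and then substituting into $\proves{}{T_o \And T_o}$. Your careful reconstruction of the base theorem $T_o \And T_o$ (Andrews' 5212) via reflexivity, the Beta-Reduction Rule, and A7/A8 is precisely the adaptation the paper's preceding comment list prescribes.
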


\begin{proof}
Similar to the proof of theorem 5213 in~\cite{Andrews02} with
Lemma~\ref{lem:5210} used in place of theorem 5210.
\end{proof}

The proofs of the next four theorems are similar to the proofs of
theorems 5215, 5220, 5234, and 5240 except that:

\be

  \item Rule R1 and Lemma~\ref{lem:2} are used in place of rule R.

  \item Rule R$1'$ and Lemma~\ref{lem:2} are used in place of rule
  ${\rm R}'$.

  \item The Beta-Reduction Rule is used in place of the
  $\beta$-Contraction rule.

  \item Corollary~\ref{cor:5200} is used in place of theorem 5200.

  \item Axiom schema A4 and Lemma~\ref{lem:2} are used in place of
  theorem 5207.

  \item Corollary~\ref{cor:5209} is used in place of theorem 5209.

  \item Lemma~\ref{lem:5210} is used in place of theorem 5210.

  \item Lemma~\ref{lem:5213} is used in place of theorem 5213.

  \item Rule R$2'$ is used in place of theorem 5224 (MP).

  \item Axiom schemas A5--A8 are used to discharge definedness
  conditions.

\ee

\begin{theorem}[Universal Instantiation]\label{thm:5215}
If $\proves{\sH}{{\textbf{A}_\alpha \IsDefApp}}$ and
$\proves{\sH}{\forall \textbf{x}_\alpha \textbf{B}_o}$, then
$\proves{\sH}{\mname{S}^{{\bf x}_\alpha}_{{\bf A}_\alpha}
\textbf{B}_o}$, provided $\textbf{A}_\alpha$ is free for
$\textbf{x}_\alpha$ in $\textbf{B}_o$.
\end{theorem}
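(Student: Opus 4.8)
The plan is to mirror Andrews' proof of theorem 5215. First I would unfold the abbreviation $\Forall\textbf{x}_\alpha\textbf{B}_o$ into the equation $[\lambda y_\alpha T_o] = [\lambda\textbf{x}_\alpha\textbf{B}_o]$ between two wffs of type $o\alpha$, and then apply both of these abstractions to the argument $\textbf{A}_\alpha$. Since $\textbf{A}_\alpha$ is assumed defined, beta-reduction collapses $[\lambda y_\alpha T_o]\textbf{A}_\alpha$ to $T_o$ and $[\lambda\textbf{x}_\alpha\textbf{B}_o]\textbf{A}_\alpha$ to $\mname{S}^{{\bf x}_\alpha}_{{\bf A}_\alpha}\textbf{B}_o$; feeding the theorem $T_o$ through this chain in the opposite direction then yields the conclusion. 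Throughout, axiom schema A4 together with Lemma~\ref{lem:2} plays the role of Andrews' $\lambda$-contraction theorem 5207, R$1'$ and Lemma~\ref{lem:2} play the role of his rule R$'$, R$2'$ replaces modus ponens, and A5--A8 discharge the definedness side conditions.

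Concretely, I would first record that $\proves{}{T_o}$: by A6 the constant $\mname{Q}_{ooo}$ is defined, so Corollary~\ref{cor:5200} gives $\proves{}{\mname{Q}_{ooo} = \mname{Q}_{ooo}}$, which is exactly $T_o$, and hence $\proves{\sH}{T_o}$. Next, the instance of A4 for $[\lambda y_\alpha T_o]\textbf{A}_\alpha$ reads ${\textbf{A}_\alpha\IsDefApp} \Implies [[\lambda y_\alpha T_o]\textbf{A}_\alpha \QuasiEqual T_o]$ (since $y_\alpha$ is not free in $T_o$), so discharging the antecedent against the hypothesis $\proves{\sH}{{\textbf{A}_\alpha\IsDefApp}}$ by R$2'$ gives $\proves{\sH}{[\lambda y_\alpha T_o]\textbf{A}_\alpha \QuasiEqual T_o}$. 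Using Lemma~\ref{lem:1} (reflexivity of $\QuasiEqual$) and one application of R$1'$, I flip this to $\proves{\sH}{T_o \QuasiEqual [\lambda y_\alpha T_o]\textbf{A}_\alpha}$, and then R$1'$ applied to $\proves{\sH}{T_o}$ rewrites the occurrence of $T_o$ to obtain $\proves{\sH}{[\lambda y_\alpha T_o]\textbf{A}_\alpha}$. Since both function abstractions are defined by A7, Lemma~\ref{lem:2} turns the unfolded hypothesis into $\proves{\sH}{[\lambda y_\alpha T_o] \QuasiEqual [\lambda\textbf{x}_\alpha\textbf{B}_o]}$, and one more use of R$1'$ replaces $[\lambda y_\alpha T_o]$ by $[\lambda\textbf{x}_\alpha\textbf{B}_o]$ to give $\proves{\sH}{[\lambda\textbf{x}_\alpha\textbf{B}_o]\textbf{A}_\alpha}$. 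Finally the Beta-Reduction Rule, whose definedness hypothesis is again the given $\proves{\sH}{{\textbf{A}_\alpha\IsDefApp}}$ and whose free-for proviso is the one assumed in the statement, rewrites this to $\proves{\sH}{\mname{S}^{{\bf x}_\alpha}_{{\bf A}_\alpha}\textbf{B}_o}$.

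The step I expect to require the most care is the interaction between the now-conditional beta-reduction and the $\QuasiEqual$/$=$ distinction, which is exactly where {\qzerou} diverges from {\qzero}. In {\qzero} beta-reduction is an unconditional equation, but here A4 only yields a quasi-equality guarded by ${\textbf{A}_\alpha\IsDefApp}$, so each reduction must be paired with an R$2'$ discharge of the definedness hypothesis, and each move between $\QuasiEqual$ and $=$ (needed because R$1'$ operates on quasi-equalities while the unfolded $\forall$ is an equality) must be licensed by Lemma~\ref{lem:2} after confirming definedness via A7 or A8. The only other point needing attention is the side condition on R$1'$: in every rewrite above the replaced occurrence sits at the top level of the wff, neither immediately preceded by $\lambda$ nor inside a $\lambda$-scope binding a variable that is free in a member of $\sH$, so R$1'$ applies without restriction.
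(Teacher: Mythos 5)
Your proposal is correct and follows essentially the same route as the paper, which simply defers to Andrews' proof of theorem 5215 modified by the enumerated substitutions (A4 plus Lemma~\ref{lem:2} for 5207, the Beta-Reduction Rule for $\beta$-contraction, R$1'$/R$2'$ for R$'$ and MP, A5--A8 for definedness conditions); you have merely written that adaptation out explicitly. The details you supply --- deriving $T_o$ via A6 and Corollary~\ref{cor:5200}, discharging the A4 antecedents with R$2'$, the symmetry flip via Lemma~\ref{lem:1}, and the checks on the R$1'$ side conditions --- are all sound.
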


\begin{proof}
Similar to the proof of theorem 5215 ($\forall{\rm I}$)
in~\cite{Andrews02}.  See the comment above.
\end{proof}

\begin{theorem}[Universal Generalization]\label{thm:gen}
If $\proves{\sH}{\textbf{A}_o}$, then $\proves{\sH}{\forall
\textbf{x}_\alpha \textbf{A}_o}$, provided $\textbf{x}_\alpha$ is not
free in any wff in $\sH$.
\end{theorem}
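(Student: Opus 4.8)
The plan is to unfold the abbreviation $\forall \textbf{x}_\alpha \textbf{A}_o$, which stands for $[\lambda y_\alpha T_o] = [\lambda \textbf{x}_\alpha \textbf{A}_o]$, and to obtain it by a single application of R$1'$ that rewrites the body of the right-hand abstraction from $T_o$ to $\textbf{A}_o$ inside a reflexivity instance. The hypothesis that $\textbf{x}_\alpha$ is not free in any member of $\sH$ will be consumed exactly in meeting the proviso of R$1'$, which is what permits rewriting an occurrence that lies inside a well-formed part $\lambda \textbf{x}_\alpha[\cdots]$.

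First I would establish the auxiliary fact $\proves{\sH}{T_o \QuasiEqual \textbf{A}_o}$. By Proposition~\ref{prop:odefined} we have $\proves{}{{T_o\IsDefApp}}$ and $\proves{}{{\textbf{A}_o\IsDefApp}}$, so by Lemma~\ref{lem:2} it suffices to derive $\proves{\sH}{T_o = \textbf{A}_o}$. This is the {\qzerou} counterpart of the elementary {\qzero} fact that a provable wff of type $o$ is provably equal to $T_o$: from $\proves{\sH}{\textbf{A}_o}$ and the theorem $\proves{}{\textbf{A}_o = [T_o = \textbf{A}_o]}$ (provable by the same truth-value reasoning used in {\qzero} before 5220, via A1 with Proposition~\ref{prop:odefined} and Lemma~\ref{lem:2}), one application of R$1'$ rewrites $\textbf{A}_o$ into $T_o = \textbf{A}_o$; no abstraction surrounds the rewritten occurrence, so the proviso is vacuous here.

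Next, A7 gives $\proves{}{{[\lambda \textbf{x}_\alpha T_o]\IsDefApp}}$, whence Corollary~\ref{cor:5200} yields the reflexivity theorem $\proves{}{[\lambda \textbf{x}_\alpha T_o] = [\lambda \textbf{x}_\alpha T_o]}$ and hence $\proves{\sH}{[\lambda \textbf{x}_\alpha T_o] = [\lambda \textbf{x}_\alpha T_o]}$. I would then apply R$1'$ with the quasi-equation $T_o \QuasiEqual \textbf{A}_o$ to replace the single occurrence of $T_o$ inside the right-hand $[\lambda \textbf{x}_\alpha T_o]$ by $\textbf{A}_o$, producing $[\lambda \textbf{x}_\alpha T_o] = [\lambda \textbf{x}_\alpha \textbf{A}_o]$, which is $\forall \textbf{x}_\alpha \textbf{A}_o$. (The bound variable displayed as $y_\alpha$ in the abbreviation is immaterial, since $T_o$ is closed and $[\lambda y_\alpha T_o]$ and $[\lambda \textbf{x}_\alpha T_o]$ denote the same constant function.)

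The hard part will be justifying this last substitution, for it is here that the theorem earns its freshness hypothesis. The rewritten occurrence of $T_o$ lies inside the well-formed part $\lambda \textbf{x}_\alpha T_o$, and $\textbf{x}_\alpha$ is in general free in the substitution wff $T_o \QuasiEqual \textbf{A}_o$ (namely in $\textbf{A}_o$); the proviso of R$1'$ therefore sanctions the step precisely because $\textbf{x}_\alpha$ is not free in any member of $\sH$. Checking that this is the only clause of the proviso that is engaged---so that R$1'$ genuinely applies---is the crux, while the surrounding bookkeeping (the passage between $=$ and $\QuasiEqual$ via Lemma~\ref{lem:2}, and the immateriality of the displayed bound variable) is routine.
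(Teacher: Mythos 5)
Your proof is correct and is essentially the paper's proof, which simply defers to Andrews' proof of 5220 with Corollary~\ref{cor:5200} in place of 5200, R$1'$ plus Lemma~\ref{lem:2} in place of rule R$'$, and A7 used to discharge the definedness condition; the single R$1'$ rewrite of $T_o$ to $\textbf{A}_o$ inside $\lambda\textbf{x}_\alpha[\cdots]$, licensed precisely by the freshness of $\textbf{x}_\alpha$ in $\sH$, is exactly where Andrews uses R$'$. The only loose end is your purely semantic dismissal of the mismatch between the displayed bound variable $y_\alpha$ and $\textbf{x}_\alpha$ in the left-hand abstraction, which formally requires the {\qzerou} analogue of Andrews' $\alpha$-conversion theorem (5206) rather than an appeal to denotation, but this is the same routine elision the paper makes by proving the theorem only by reference.
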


\begin{proof}
Similar to the proof of theorem 5220 (Gen) in~\cite{Andrews02}.
See the comment above.
\end{proof}

\begin{theorem}[Tautology Theorem]\label{thm:tautology}
If $\proves{\sH}{\textbf{A}^{1}_{o}}$, \ldots,
$\proves{\sH}{\textbf{A}^{n}_{o}}$ and $[\textbf{A}^{1}_{o} \And
\cdots \And \textbf{A}^{n}_{o}] \Implies \textbf{B}_o$ is tautologous
for $n \ge 1$, then $\proves{\sH}{\textbf{B}_o}$.  Also, if
$\textbf{B}_o$ is tautologous, then $\proves{\sH}{\textbf{B}_o}$.
\end{theorem}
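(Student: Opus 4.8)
The plan is to reduce the full ``Rule~P'' statement to its special case for a single tautologous wff, and then to prove that special case by porting Andrews' proof of theorem~5234, discharging the new definedness side conditions by means of Proposition~\ref{prop:odefined} and axiom schemas A5--A8.

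First I would dispose of the reduction. Suppose $\proves{\sH}{\textbf{A}^{1}_{o}}, \ldots, \proves{\sH}{\textbf{A}^{n}_{o}}$ and $[\textbf{A}^{1}_{o} \And \cdots \And \textbf{A}^{n}_{o}] \Implies \textbf{B}_o$ is tautologous. The curried wff $[\textbf{A}^{1}_{o} \Implies [\textbf{A}^{2}_{o} \Implies \cdots \Implies [\textbf{A}^{n}_{o} \Implies \textbf{B}_o]\cdots]]$ is tautologically equivalent to it, hence also tautologous; granting the second (``$\textbf{B}_o$ tautologous'') assertion of the theorem, it is provable from $\sH$. Applying R$2'$ exactly $n$ times, once for each hypothesis $\proves{\sH}{\textbf{A}^{i}_{o}}$, then yields $\proves{\sH}{\textbf{B}_o}$. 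Thus everything rests on showing that an arbitrary tautologous $\textbf{B}_o$ is a theorem; since a proof from $\emptyset$ is a fortiori a proof from $\sH$, it suffices to establish $\proves{}{\textbf{B}_o}$.

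For this core claim I would follow Andrews' route. Let $\textbf{D}^{1}_{o}, \ldots, \textbf{D}^{m}_{o}$ be the distinct propositional atoms of $\textbf{B}_o$ and replace each by a fresh variable $p^{i}_{o}$ to obtain a wff $\textbf{B}'_o$ that is a tautology in the variables $p^{1}_{o}, \ldots, p^{m}_{o}$. The heart of the argument is a single-variable elimination: if $\proves{}{\mname{S}^{p_o}_{T_o}\textbf{E}_o}$ and $\proves{}{\mname{S}^{p_o}_{F_o}\textbf{E}_o}$, then $\proves{}{\textbf{E}_o}$. To see this, put $g_{oo}$ for $[\lambda p_o \textbf{E}_o]$; since $T_o$ and $F_o$ are wffs of type $o$ and hence defined by Proposition~\ref{prop:odefined}, the Beta-Reduction Rule together with Lemma~\ref{lem:2} turns the two hypotheses into $\proves{}{g_{oo}T_o}$ and $\proves{}{g_{oo}F_o}$; combining these into $\proves{}{g_{oo}T_o \And g_{oo}F_o}$ (the conjunction-introduction following from Lemmas~\ref{lem:5210} and~\ref{lem:5213}) and rewriting by axiom schema A1 (Truth Values) with R$1'$ gives $\proves{}{\Forall x_o [g_{oo}x_o]}$, from which Universal Instantiation (Theorem~\ref{thm:5215}) at the defined variable $p_o$ and one more beta-reduction recover $\proves{}{\textbf{E}_o}$. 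Iterating this elimination over $p^{1}_{o}, \ldots, p^{m}_{o}$ reduces $\proves{}{\textbf{B}'_o}$ to the $2^{m}$ ground instances obtained by substituting $T_o$ or $F_o$ for every variable; each such instance is a propositional combination of $T_o$ and $F_o$ that, because $\textbf{B}'_o$ is tautologous, evaluates to $\mname{T}$, and is provable by reducing it with the elementary truth-table identities (obtained from Corollary~\ref{cor:5200} and Lemmas~\ref{lem:5210} and~\ref{lem:5213}) down to the theorem $\proves{}{T_o}$. Finally I would restore the atoms: applying Universal Generalization (Theorem~\ref{thm:gen}) to $\proves{}{\textbf{B}'_o}$ and then instantiating each $p^{i}_{o}$ with the corresponding $\textbf{D}^{i}_{o}$---legitimate because every $\textbf{D}^{i}_{o}$ is of type $o$ and so defined by Proposition~\ref{prop:odefined}---yields $\proves{}{\textbf{B}_o}$.

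I expect the main obstacle to be bookkeeping rather than conceptual: in {\qzerou} every appeal to beta-reduction (A4), to Universal Instantiation, and to the internal truth identity of Lemma~\ref{lem:5210} now carries a definedness hypothesis that is absent in {\qzero}. The decisive observation that makes the whole argument go through unchanged is that all the wffs manipulated here are of type $o$, and all the terms substituted ($T_o$, $F_o$, and the atoms $\textbf{D}^{i}_{o}$) are of type $o$ as well, so that Proposition~\ref{prop:odefined} together with A5--A8 discharges every such condition and Lemma~\ref{lem:2} silently converts the resulting quasi-equalities into equalities. Once this is noted, the proof is Andrews' proof of theorem~5234 with the Beta-Reduction Rule in place of $\beta$-contraction and R$2'$ in place of his MP (theorem~5224).
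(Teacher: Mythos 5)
Your proposal is correct and follows essentially the same route as the paper, which simply ports Andrews' proof of theorem 5234 with the Beta-Reduction Rule in place of $\beta$-contraction, R$2'$ in place of MP, and Proposition~\ref{prop:odefined} together with A5--A8 to discharge the new definedness side conditions. Your reconstruction just spells out in more detail what the paper leaves to the reader via its list of substitutions preceding Theorems~\ref{thm:5215}--\ref{thm:deduction}.
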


\begin{proof}
Similar to the proof of theorem 5234 (Rule P) in~\cite{Andrews02}.
See the comment above.
\end{proof}

\begin{proposition}\label{prop:1}
$\proves{}{[\textbf{A}_\alpha = \textbf{B}_\alpha] \Implies
[\textbf{A}_\alpha \QuasiEqual \textbf{B}_\alpha]}.$
\end{proposition}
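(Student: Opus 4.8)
The plan is to notice that, once the abbreviation $\QuasiEqual$ is unfolded, the target wff is a propositional tautology, and then to invoke the Tautology Theorem (Theorem~\ref{thm:tautology}).

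First I would expand the definition of quasi-equality. By the abbreviation introduced in Section~\ref{sec:definitions}, $[\textbf{A}_\alpha \QuasiEqual \textbf{B}_\alpha]$ stands for $[[{\textbf{A}_\alpha\IsDefApp} \Or {\textbf{B}_\alpha\IsDefApp}] \Implies [\textbf{A}_\alpha = \textbf{B}_\alpha]]$. Substituting this into the goal, the wff to be proved is
\[
[\textbf{A}_\alpha = \textbf{B}_\alpha] \Implies
[[{\textbf{A}_\alpha\IsDefApp} \Or {\textbf{B}_\alpha\IsDefApp}]
\Implies [\textbf{A}_\alpha = \textbf{B}_\alpha]].
\]

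Next I would observe that this wff is tautologous. Writing $P$ for the propositional atom corresponding to $[\textbf{A}_\alpha = \textbf{B}_\alpha]$ and $X$ for the (possibly compound) propositional formula corresponding to $[{\textbf{A}_\alpha\IsDefApp} \Or {\textbf{B}_\alpha\IsDefApp}]$, the displayed wff has the propositional form $P \Implies [X \Implies P]$. This is a tautology for any $X$: if $P$ is false the outer implication holds vacuously, and if $P$ is true the inner implication holds because its consequent is again $P$. In particular, the internal propositional structure of the definedness subformulas is irrelevant, since the consequent of the inner implication is literally the same wff as the antecedent of the outer implication.

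Finally I would apply the second assertion of the Tautology Theorem, namely that every tautologous $\wff{o}$ is provable from any $\sH$; taking $\sH = \emptyset$ yields $\proves{}{[\textbf{A}_\alpha = \textbf{B}_\alpha] \Implies [\textbf{A}_\alpha \QuasiEqual \textbf{B}_\alpha]}$. There is no real obstacle here: the only point to verify is that the Tautology Theorem is applicable, which it is, since the expanded wff has type $o$ and is a propositional tautology.
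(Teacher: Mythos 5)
Your proposal is correct and is exactly the paper's argument: unfold the abbreviation $[\textbf{A}_\alpha \QuasiEqual \textbf{B}_\alpha]$ to $[[{\textbf{A}_\alpha\IsDefApp} \Or {\textbf{B}_\alpha\IsDefApp}] \Implies [\textbf{A}_\alpha = \textbf{B}_\alpha]]$, observe that the resulting wff has the tautologous form $P \Implies [X \Implies P]$, and apply the second clause of the Tautology Theorem with $\sH = \emptyset$. The paper's proof states this in one line (``Follows from the definition of $\QuasiEqual$ and the Tautology Theorem''); you have simply made the same reasoning explicit.
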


\begin{proof}
Follows from the definition of $\QuasiEqual$ and the Tautology Theorem.
\end{proof}

\begin{theorem}[Deduction Theorem]\label{thm:deduction}
If $\proves{\sH \cup \set{\textbf{H}_o}}{\textbf{P}_o}$, then
$\proves{\sH}{\textbf{H}_o \Implies \textbf{P}_o}$.
\end{theorem}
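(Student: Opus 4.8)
The plan is to argue by induction on the length of the sequence $\sS_2$ witnessing $\proves{\sH \cup \set{\textbf{H}_o}}{\textbf{P}_o}$, showing for every member $\textbf{D}_o$ of $\sS_2$ that $\proves{\sH}{\textbf{H}_o \Implies \textbf{D}_o}$; since $\textbf{P}_o$ is the last member, the case $\textbf{D}_o = \textbf{P}_o$ yields the theorem. The simple cases all reduce to the Tautology Theorem (Theorem~\ref{thm:tautology}). If $\textbf{D}_o$ is a member of $\sS_1$ (hence a theorem) or lies in $\sH$, then $\proves{\sH}{\textbf{D}_o}$, and since $\textbf{D}_o \Implies [\textbf{H}_o \Implies \textbf{D}_o]$ is tautologous we obtain $\proves{\sH}{\textbf{H}_o \Implies \textbf{D}_o}$. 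If $\textbf{D}_o$ is $\textbf{H}_o$ itself, then $\textbf{H}_o \Implies \textbf{H}_o$ is tautologous. The R2 case is equally routine: if $\textbf{D}_o$ comes from $\textbf{E}_o$ and $\textbf{E}_o \Implies \textbf{D}_o$ by modus ponens, the induction hypothesis supplies $\proves{\sH}{\textbf{H}_o \Implies \textbf{E}_o}$ and $\proves{\sH}{\textbf{H}_o \Implies [\textbf{E}_o \Implies \textbf{D}_o]}$, and the tautologous implication from these two wffs to $\textbf{H}_o \Implies \textbf{D}_o$ closes the case.

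The whole difficulty is concentrated in the R1 case, where $\textbf{D}_o$ is obtained from preceding members $\textbf{A}_\alpha \QuasiEqual \textbf{B}_\alpha$ and $\textbf{C}_o$ by replacing one occurrence of $\textbf{A}_\alpha$ in $\textbf{C}_o$ by $\textbf{B}_\alpha$. The induction hypothesis gives only the \emph{conditional} quasi-equation $\proves{\sH}{\textbf{H}_o \Implies [\textbf{A}_\alpha \QuasiEqual \textbf{B}_\alpha]}$ together with $\proves{\sH}{\textbf{H}_o \Implies \textbf{C}_o}$, so R$1'$ cannot be applied directly, since it demands the quasi-equation outright from $\sH$. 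The key auxiliary result I would establish first, by induction on the structure of $\textbf{C}_o$ using Leibniz' Law (A2), Extensionality (A3), Beta-Reduction (A4), Lemma~\ref{lem:2}, and Universal Instantiation (Theorem~\ref{thm:5215}), is a \emph{conditional substitutivity theorem}: if $\textbf{x}^1_{\beta_1}, \ldots, \textbf{x}^k_{\beta_k}$ are the variables bound by a $\lambda$ whose scope contains the replaced occurrence and that occur free in $\textbf{A}_\alpha \QuasiEqual \textbf{B}_\alpha$, then
\[\proves{}{[\forall \textbf{x}^1_{\beta_1} \cdots \forall \textbf{x}^k_{\beta_k} [\textbf{A}_\alpha \QuasiEqual \textbf{B}_\alpha]] \Implies [\textbf{C}_o = \textbf{D}_o]}.\]
The universal closure over the captured variables is precisely what lets the substitution pass through the binders: at a $\lambda \textbf{x}^i_{\beta_i}$ node one appeals to A3 and A4 to reduce the equality of the two abstractions to the universally quantified quasi-equation of their bodies, consuming one quantifier.

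To close the R1 case I would then exploit the side condition attached to R1 in a proof from hypotheses: none of $\textbf{x}^1_{\beta_1}, \ldots, \textbf{x}^k_{\beta_k}$ is free in any member of $\sH \cup \set{\textbf{H}_o}$, hence none is free in $\sH$ or in $\textbf{H}_o$. Applying Universal Generalization (Theorem~\ref{thm:gen}) to $\proves{\sH}{\textbf{H}_o \Implies [\textbf{A}_\alpha \QuasiEqual \textbf{B}_\alpha]}$ over these variables, and then pushing the quantifiers past $\textbf{H}_o$ by the distribution law $\proves{}{\forall \textbf{x}_\beta [\textbf{H}_o \Implies \textbf{P}_o] \Implies [\textbf{H}_o \Implies \forall \textbf{x}_\beta \textbf{P}_o]}$ for $\textbf{x}_\beta$ not free in $\textbf{H}_o$ (itself derivable from Theorems~\ref{thm:5215}, \ref{thm:gen}, and~\ref{thm:tautology} without any appeal to the Deduction Theorem), yields $\proves{\sH}{\textbf{H}_o \Implies \forall \textbf{x}^1_{\beta_1} \cdots \forall \textbf{x}^k_{\beta_k} [\textbf{A}_\alpha \QuasiEqual \textbf{B}_\alpha]}$. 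Chaining this with the conditional substitutivity theorem (a theorem, hence available from $\sH$) gives $\proves{\sH}{\textbf{H}_o \Implies [\textbf{C}_o = \textbf{D}_o]}$; since $\textbf{C}_o$ and $\textbf{D}_o$ have type $o$ this equation is their equivalence, so combining it with $\proves{\sH}{\textbf{H}_o \Implies \textbf{C}_o}$ through the appropriate tautology and Theorem~\ref{thm:tautology} delivers $\proves{\sH}{\textbf{H}_o \Implies \textbf{D}_o}$.

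The main obstacle is this R1 case, and within it the conditional substitutivity theorem: getting the bound-variable bookkeeping right so that the universal closure matches exactly the variables protected by the R1 side condition, and carrying out the structural induction that threads the quantifiers through each $\lambda$-binder via extensionality and beta-reduction. A secondary point requiring care is that every auxiliary fact invoked here, in particular the quantifier-distribution law, must be derived independently of the Deduction Theorem to avoid circularity.
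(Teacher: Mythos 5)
Your proposal is correct and takes essentially the same route as the paper, whose proof simply defers to Andrews' proof of theorem 5240 in~\cite{Andrews02} with the substitutions listed before Theorem~\ref{thm:5215}: induction on the proof from hypotheses, trivial cases by the Tautology Theorem, and the R1 case handled by the quantifier-distribution law (Andrews' 5237) together with a conditional substitutivity theorem (the {\qzerou} analogue, with $\QuasiEqual$ in the antecedent, of Andrews' 5239), using the side condition on R1 to justify generalizing over the captured variables. Your reconstruction, including the warning that these auxiliary facts must be derived without the Deduction Theorem, matches that argument.
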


\begin{proof}
Similar to the proof of theorem 5240 in~\cite{Andrews02}.  See the
comment above.
\end{proof}

\section{Soundness and Completeness} \label{sec:completeness}

In this section, let $\sH$ be a set of $\wffs{o}$.  $\sH$ is
\emph{consistent} if there is no proof of $F_o$ from $\sH$ in
{\qzerou}.

\medskip

\begin{theorem}[Soundness Theorem]
If $\proves{\sH}{\textbf{A}_o}$, then $\sH \models \textbf{A}_o$.
\end{theorem}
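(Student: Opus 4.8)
The plan is to prove the statement by induction on the length of a proof of $\textbf{A}_o$ from $\sH$. The standard strategy for soundness is to show that validity in every general model for $\sH$ is preserved at each step of a derivation. So I fix an arbitrary general model $\sM$ for $\sH$ and show by induction that every $\wff{o}$ appearing in the second sequence $\sS_2$ of a proof of $\textbf{A}_o$ from $\sH$ is valid in $\sM$; applying this to the last member $\textbf{A}_o$ and then quantifying over all such $\sM$ gives $\sH \models \textbf{A}_o$. Because the definition of a proof from $\sH$ in {\qzerou} routes everything through the first sequence $\sS_1$ (a proof in {\qzerou}) and four closure conditions on $\sS_2$, I would first handle the special case $\sH = \emptyset$, i.e.\ show that every theorem of {\qzerou} is valid in every general model, and then lift to arbitrary $\sH$.

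The core of the argument is two separate verifications. First, I would show that each of the thirteen axiom schemas A1--A13 is valid in every general model $\sM$; that is, $\sV^{\cal M}_{\phi}$ evaluates every instance to $\mname{T}$ for all $\phi$. This is a collection of direct semantic calculations using clauses (a)--(d) of the definition of $\sV^{\cal M}$ together with the defined abbreviations from Section~\ref{sec:definitions}. For the definedness axioms A5--A8 I would appeal directly to Proposition~1 (variables, primitive constants, function abstractions, and type-$o$ applications are always defined), and for A9--A13 I would use the ``otherwise'' branch of clause (c), the undefinedness branch of clause (d), and the defining property of the unique member selector $\sJ(\iota_{\alpha(o\alpha)})$. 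Second, I would verify that the two inference rules preserve validity in $\sM$: for R2 (modus ponens) this is routine from the truth table of $\Implies$ and the totality of $\sD_o$; for R1 (quasi-equality substitution) the key fact is that if $\sV^{\cal M}_{\phi}(\textbf{A}_\alpha \QuasiEqual \textbf{B}_\alpha) = \mname{T}$ for all $\phi$, then replacing one occurrence of $\textbf{A}_\alpha$ by $\textbf{B}_\alpha$ in a valid $\textbf{C}_o$ yields a wff with the same value under $\sV^{\cal M}_{\phi}$.

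The main obstacle is R1, and specifically the bookkeeping around the side condition. Quasi-equality, not equality, is what the rule transmits, so the substitution lemma I need must say: if $\textbf{A}_\alpha$ and $\textbf{B}_\alpha$ are \emph{quasi-equal} under $\phi$ (both undefined, or both defined and equal), then substituting one for the other at an admissible occurrence leaves the value of the containing $\wff{o}$ unchanged. The proof is by induction on the structure of $\textbf{C}_o$, but two points require care. The occurrence must not be a variable immediately preceded by $\lambda$ (this is why the rule's restriction is stated), and when the occurrence lies inside a $\lambda$-abstraction $\lambda\textbf{x}_\beta\textbf{E}_\gamma$ the assignment $\phi$ effectively ranges over all $(\phi : \textbf{x}_\beta/d)$, so I must know that $\textbf{A}_\alpha \QuasiEqual \textbf{B}_\alpha$ holds under every such modified assignment---which is exactly what the side condition on $\sS_2$ (that $\textbf{x}_\beta$ not be free both in a member of $\sH$ and in $\textbf{A}_\alpha \QuasiEqual \textbf{B}_\alpha$) secures, since validity of the quasi-equality then persists under reassignment of $\textbf{x}_\beta$. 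The subtle interaction here is that a possibly \emph{undefined} subterm can be replaced, and the substitution must be shown to preserve the value even when it propagates undefinedness through clauses (c) and (d); handling the type-$o$ versus type-$\alp!=o$ branches of clause (c) consistently is the delicate part. Once the substitution lemma is in hand, the inductive step for R1 is immediate, and the induction closes.
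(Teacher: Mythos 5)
Your proposal is correct, and its core---checking that every instance of A1--A13 is valid in each general model and that R1 and R2 preserve validity---is exactly the ``straightforward verification'' with which the paper establishes the case $\sH = \emptyset$. Where you genuinely diverge is in lifting from theorems to consequences of an arbitrary $\sH$: you run a direct induction over the second sequence $\sS_2$, discharging each of the four closure conditions in the definition of a proof from $\sH$, whereas the paper extracts a finite subset $\set{\textbf{H}^{1}_{o},\ldots,\textbf{H}^{n}_{o}} \subseteq \sH$, applies the already-proved Deduction Theorem to obtain $\proves{}{\textbf{H}^{1}_{o} \Implies \cdots \Implies \textbf{H}^{n}_{o} \Implies \textbf{A}_o}$, and then combines validity of that theorem with $\sM \models \textbf{H}^{i}_{o}$. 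The paper's route reuses a metatheorem it has already paid for and so never has to reason semantically about rule applications inside $\sS_2$; your route is self-contained but must justify condition~3 of that definition directly. One correction to your account of that step: since validity in $\sM$ means truth under \emph{all} assignments, a valid quasi-equality premise already holds under every modified assignment $(\phi : \textbf{x}_\beta/d)$, so R1 preserves validity in $\sM$ with no appeal to the side condition on $\sS_2$; that side condition exists to make the (syntactic) Deduction Theorem go through, not to rescue the semantic argument. Your substitution lemma for quasi-equal subterms---including the careful propagation of undefinedness through clauses (c) and (d) of the definition of a general model---is the right content to put behind the paper's one-line verification of R1, and is needed on either route.
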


\begin{proof}
A straightforward verification shows that (1) each instance of each
axiom schema of {\qzerou} is valid and (2) the rules of inference of
{\qzerou}, R1 and R2, preserve validity in every general model for
{\qzerou}.  This shows that if $\proves{}{\textbf{A}_o}$, then ${}
\models \textbf{A}_o$.

Suppose $\proves{\sH}{\textbf{A}_o}$ and $\sM$ is a model for {\sH}.
Then there is a finite subset
$\set{\textbf{H}^{1}_{o},\ldots,\textbf{H}^{n}_{o}}$ of $\sH$ such
that
$\proves{\set{\textbf{H}^{1}_{o},\ldots,\textbf{H}^{n}_{o}}}{\textbf{A}_o}$.
By the Deduction Theorem, this implies $\proves{}{\textbf{H}^{1}_{o}
\Implies \cdots \Implies \textbf{H}^{n}_{o} \Implies \textbf{A}_o}$.
By the result just above, $\sM \models {\textbf{H}^{1}_{o} \Implies
\cdots \Implies \textbf{H}^{n}_{o} \Implies \textbf{A}_o}$.  But $\sM
\models \textbf{H}^{i}_{o}$ for all $i$ with $1 \le i \le n$ since
$\sM$ is a model for {\sH}.  Therefore $\sM \models \textbf{A}_o$, and
so $\sH \models \textbf{A}_o$.
\end{proof}

\begin{theorem}[Consistency Theorem]
If $\sH$ has a general model, then $\sH$ is consistent.
\end{theorem}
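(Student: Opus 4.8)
The plan is to prove the contrapositive of the standard soundness-to-consistency inference, leaning entirely on the Soundness Theorem just established. Suppose, for contradiction, that $\sH$ has a general model $\sM$ but that $\sH$ is inconsistent, so that $\proves{\sH}{F_o}$. By the Soundness Theorem this gives $\sH \models F_o$, and since $\sM$ is a general model for $\sH$ we conclude $\sM \models F_o$, i.e., $\sV^{\cal M}_{\phi}(F_o) = \mname{T}$ for every assignment $\phi$.

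The crux is then to show that this is impossible by computing the value of $F_o$ directly from the semantics. Recall that $F_o$ abbreviates $[\lambda x_o T_o] = [\lambda x_o x_o]$. First, using clause (b) of the definition of a general model together with the fact that $\sJ(\mname{Q}_{ooo})$ is the identity relation on $\sD_o$, one checks that $\sV^{\cal M}_{\phi}(T_o) = \mname{T}$, since $T_o$ is $[\mname{Q}_{ooo} = \mname{Q}_{ooo}]$ and every element equals itself. Next, clause (d) shows that $\sV^{\cal M}_{\phi}([\lambda x_o T_o])$ is the total function on $\sD_o$ sending every truth value to $\mname{T}$, while clause (a) shows that $\sV^{\cal M}_{\phi}([\lambda x_o x_o])$ is the identity function on $\sD_o$. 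These two functions disagree at $\mname{F}$, hence are distinct elements of $\sD_{oo}$, so applying clause (c) with the identity relation interpreting the outer $\mname{Q}_{o(oo)(oo)}$ yields $\sV^{\cal M}_{\phi}(F_o) = \mname{F}$. This contradicts $\sV^{\cal M}_{\phi}(F_o) = \mname{T}$, and the contradiction establishes that $\sH$ is consistent.

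I expect no genuine obstacle here: the argument is short, and its only substantive content is the routine verification that $\sV^{\cal M}_{\phi}(F_o) = \mname{F}$ in every general model. The one point deserving a moment's care is that all the functions involved have type $oo$ or $o(oo)(oo)$, i.e., predicate types, so by the frame condition their domains contain only total functions; consequently the partial-function features of the {\qzerou} semantics play no role and the computation proceeds exactly as it would in {\qzero}.
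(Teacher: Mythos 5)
Your proposal is correct and follows essentially the same route as the paper: assume $\proves{\sH}{F_o}$, apply the Soundness Theorem to get $\sM \models F_o$, and derive a contradiction from the fact that $\sV^{\cal M}_{\phi}(F_o) = \mname{F}$ in every general model. The paper compresses this last step into the phrase ``contradicts the definition of a general model,'' whereas you carry out the routine semantic computation explicitly; the content is the same.
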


\begin{proof}
Let $\sM$ be a general model for $\sH$.  Assume that $\sH$ is
inconsistent, i.e., that $\proves{\sH}{F_o}$.  Then, by the Soundness
Theorem, $\sH \models F_o$ and hence $\sM \models F_o$.  This means
that $\sV^{\cal M}_{\phi}(F_o) = \mname{T}$ (for any assignment
$\phi$), which contradicts the definition of a general model.
\end{proof}

A \emph{cwff} [$\textit{cwff}_\alpha$] is a closed wff [closed
$\wff{\alpha}$].  A \emph{sentence} is a $\cwff{o}$.  Let $\sH$ be
a set of sentences.  $\sH$ is \emph{complete} in {\qzerou} if, for
every sentence $\textbf{A}_o$, either $\proves{\sH}{\textbf{A}_o}$ or
$\proves{\sH}{\NegAlt\textbf{A}_o}$.  $\sH$ is \emph{extensionally
complete} in {\qzerou} if, for every sentence of the form
$\textbf{A}_{\alpha\beta} = \textbf{B}_{\alpha\beta}$, there is a cwff
$\textbf{C}_\beta$ such that:

\be

  \item $\proves{\sH}{{\textbf{C}_\beta\IsDefApp}}$.

  \item $\proves{\sH}{[{\textbf{A}_{\alpha\beta}\IsDefApp} \And
  {\textbf{B}_{\alpha\beta}\IsDefApp} \And [\textbf{A}_{\alpha\beta}
  \textbf{C}_\beta \QuasiEqual \textbf{B}_{\alpha\beta}
  \textbf{C}_\beta]] \Implies [\textbf{A}_{\alpha\beta} =
  \textbf{B}_{\alpha\beta}]}$.

\ee
Let $\sL(\mbox{\qzerou})$ be the set of wffs of {\qzerou}.

\medskip

\begin{lemma}[Extension Lemma]
Let {\sG} be a consistent set of sentences of {\qzerou}.  Then there
is an expansion {\qzerouplus} of {\qzerou} and a set $\sH$ of
sentences of {\qzerouplus} such that:

\be

  \item $\sG \subseteq \sH$.

  \item $\sH$ is consistent.

  \item $\sH$ is complete in {\qzerouplus}.

  \item $\sH$ is extensionally complete in {\qzerouplus}.

  \item $\mbox{card}(\sL(\mbox{\qzerouplus})) =
  \mbox{card}(\sL(\mbox{\qzerou}))$.

\ee
\end{lemma}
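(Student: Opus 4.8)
The plan is to carry out a Henkin-style construction: I build $\sH$ as the union of an increasing, consistency-preserving chain of sets obtained by deciding sentences one at a time and, whenever a negated function equality is committed, adjoining a witness that pins down the point of disagreement. First I would expand the language. Writing $\kappa = \mbox{card}(\sL(\mbox{\qzerou}))$, I form {\qzerouplus} by adjoining, for each type $\beta \in \sT$, a family of $\kappa$ fresh nonlogical constants, the \emph{witness constants}. Since only $\kappa$ constants are added, $\mbox{card}(\sL(\mbox{\qzerouplus})) = \kappa$, which is clause~5. As $\kappa$ is an infinite cardinal, the sentences of {\qzerouplus} can be enumerated as $\seq{\textbf{B}^\xi : \xi < \kappa}$ so that, for each $\xi < \kappa$, fewer than $\kappa$ witness constants occur among $\set{\textbf{B}^\eta : \eta \le \xi}$; hence a fresh witness constant of any prescribed type is always available.

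Next I would define the chain by transfinite recursion. Put $\sH_0 = \sG$; at a limit ordinal $\mu$ put $\sH_\mu = \bigcup_{\xi < \mu} \sH_\xi$; and at a successor $\xi + 1$ first adjoin whichever of $\textbf{B}^\xi$ and $\NegAlt \textbf{B}^\xi$ keeps the set consistent (at least one does, else the Deduction and Tautology Theorems would already refute the consistency of $\sH_\xi$), and then, if the sentence just committed has the form $\NegAlt[\textbf{A}_{\alpha\beta} = \textbf{B}_{\alpha\beta}]$, also adjoin the \emph{witness sentence} $W := [\textbf{A}_{\alpha\beta}\IsDefApp \And \textbf{B}_{\alpha\beta}\IsDefApp] \Implies \NegAlt[\textbf{A}_{\alpha\beta}\textbf{c}_\beta \QuasiEqual \textbf{B}_{\alpha\beta}\textbf{c}_\beta]$, where $\textbf{c}_\beta$ is a witness constant of type $\beta$ occurring neither in $\sH_\xi$ nor in $\textbf{B}^\xi$. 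With $\sH = \bigcup_{\xi < \kappa} \sH_\xi$, clause~1 is immediate from $\sH_0 = \sG$; clause~2 holds because any proof of $F_o$ is finite and so would already be a proof from some $\sH_\xi$; and clause~3 holds because each sentence occurs as some $\textbf{B}^\xi$ and is decided at stage $\xi + 1$.

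For clause~4, given a sentence $\textbf{A}_{\alpha\beta} = \textbf{B}_{\alpha\beta}$ I would take $\textbf{C}_\beta$ to be any witness constant of type $\beta$, so that the first condition of extensional completeness follows from A6. By completeness $\sH$ decides the equality. If $\proves{\sH}{\textbf{A}_{\alpha\beta} = \textbf{B}_{\alpha\beta}}$, then the consequent of the second condition is provable, so that condition holds by the Tautology Theorem. Otherwise, by consistency and completeness $\NegAlt[\textbf{A}_{\alpha\beta} = \textbf{B}_{\alpha\beta}] \in \sH$, so the matching witness sentence $W$ was adjoined and lies in $\sH$; since $W$ tautologically implies the second condition with $\textbf{C}_\beta = \textbf{c}_\beta$, the Tautology Theorem again finishes the case.

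The main obstacle is showing that adjoining a witness sentence preserves consistency; everything else is bookkeeping. Suppose $\sH' := \sH_\xi \cup \set{\NegAlt[\textbf{A}_{\alpha\beta} = \textbf{B}_{\alpha\beta}]}$ is consistent but $\sH' \cup \set{W}$ is not. By the Deduction and Tautology Theorems, $\proves{\sH'}{\textbf{A}_{\alpha\beta}\IsDefApp \And \textbf{B}_{\alpha\beta}\IsDefApp}$ and $\proves{\sH'}{\textbf{A}_{\alpha\beta}\textbf{c}_\beta \QuasiEqual \textbf{B}_{\alpha\beta}\textbf{c}_\beta}$. Because $\textbf{c}_\beta$ occurs neither in $\sH'$ nor in $\textbf{A}_{\alpha\beta}, \textbf{B}_{\alpha\beta}$, I replace it by a fresh variable throughout its proof and apply Universal Generalization to get $\proves{\sH'}{\Forall y_\beta [\textbf{A}_{\alpha\beta} y_\beta \QuasiEqual \textbf{B}_{\alpha\beta} y_\beta]}$. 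Since $\textbf{A}_{\alpha\beta}$ and $\textbf{B}_{\alpha\beta}$ are provably defined, Universal Instantiation applied to A3 yields $\proves{\sH'}{[\textbf{A}_{\alpha\beta} = \textbf{B}_{\alpha\beta}] = \Forall y_\beta [\textbf{A}_{\alpha\beta} y_\beta \QuasiEqual \textbf{B}_{\alpha\beta} y_\beta]}$, and hence $\proves{\sH'}{\textbf{A}_{\alpha\beta} = \textbf{B}_{\alpha\beta}}$, contradicting the consistency of $\sH'$. It is exactly the need to carry the definedness hypotheses through so that A3 applies, together with the fresh-constant generalization, that makes this step the delicate one in an otherwise standard Lindenbaum/Henkin argument.
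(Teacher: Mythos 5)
Your proposal is correct and takes essentially the same route as the paper: a Lindenbaum/Henkin extension following Andrews' Theorem 5500, with the witness sentence modified to carry the definedness hypotheses (your implication form is tautologically equivalent to the paper's negated conjunction $\NegAlt[{\textbf{A}_{\alpha\beta}\IsDefApp} \And {\textbf{B}_{\alpha\beta}\IsDefApp} \And [\textbf{A}_{\alpha\beta}\textbf{c}_\beta \QuasiEqual \textbf{B}_{\alpha\beta}\textbf{c}_\beta]]$). Your key consistency step---replacing the fresh constant by a fresh variable throughout a proof from a finite subset, applying Universal Generalization, and concluding via A3 with Universal Instantiation---is exactly the paper's argument.
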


\begin{proof}
The proof is very close to the proof of theorem 5500
in~\cite{Andrews02}.  The crucial difference is that, in case (c) of
the definition of $\sG_{\tau + 1}$,
\[\sG_{\tau + 1} = \sG_\tau \cup
\set{\NegAlt[{\textbf{A}_{\alpha\beta}\IsDefApp} \And
{\textbf{B}_{\alpha\beta}\IsDefApp} \And [\textbf{A}_{\alpha\beta}
\textbf{c}_\beta \QuasiEqual \textbf{B}_{\alpha\beta}
\textbf{c}_\beta]]}\] where $\textbf{c}_\beta$ is the first constant
in $\sC_\beta$ that does not occur in $\sG_\tau$ or
$\textbf{A}_{\alpha\beta} = \textbf{B}_{\alpha\beta}$.  (Notice that
$\proves{}{\textbf{c}_\beta\IsDefApp}$ by A6.)

To prove that $\sG_{\tau + 1}$ is consistent assuming $\sG_\tau$ is
consistent when $\sG_{\tau + 1}$ is obtained by case (c) , it is
necessary to show that, if
\[\proves{\sG_\tau}{{\textbf{A}_{\alpha\beta}\IsDefApp} \And
{\textbf{B}_{\alpha\beta}\IsDefApp} \And [\textbf{A}_{\alpha\beta}
\textbf{c}_\beta \QuasiEqual \textbf{B}_{\alpha\beta}
\textbf{c}_\beta]},\] then $\proves{\sG_\tau}{\textbf{A}_{\alpha\beta}
= \textbf{B}_{\alpha\beta}}$.  Assume the hypothesis of this
statement.  Let $\sP$ be a proof of
\[{\textbf{A}_{\alpha\beta}\IsDefApp} \And 
{\textbf{B}_{\alpha\beta}\IsDefApp} \And [\textbf{A}_{\alpha\beta}
  \textbf{c}_\beta \QuasiEqual \textbf{B}_{\alpha\beta}
  \textbf{c}_\beta]\] from a finite subset $\sS$ of $\sG_\tau$, and
let $\textbf{x}_\beta$ be a variable that does not occur in $\sP$ or
$\sS$.  Since $\textbf{c}_\beta$ does not occur in $\sG_\tau$,
$\textbf{A}_{\alpha\beta}$, or $\textbf{B}_{\alpha\beta}$, the result
of substituting $\textbf{x}_\beta$ for each occurrence of
$\textbf{c}_\beta$ in $\sP$ is a proof of
\[{\textbf{A}_{\alpha\beta}\IsDefApp} \And
{\textbf{B}_{\alpha\beta}\IsDefApp} \And [\textbf{A}_{\alpha\beta}
\textbf{x}_\beta \QuasiEqual \textbf{B}_{\alpha\beta}
\textbf{x}_\beta]\] from $\sS$.  Therefore,
\[\proves{\sS}{{\textbf{A}_{\alpha\beta}\IsDefApp} \And
{\textbf{B}_{\alpha\beta}\IsDefApp} \And [\textbf{A}_{\alpha\beta}
\textbf{x}_\beta \QuasiEqual \textbf{B}_{\alpha\beta}
\textbf{x}_\beta]}.\] This implies
\[\proves{\sS}{{\textbf{A}_{\alpha\beta}\IsDefApp}}, \sglsp
\proves{\sS}{{\textbf{B}_{\alpha\beta}\IsDefApp}}, \sglsp
\proves{\sS}{\forall \textbf{x}_\beta [\textbf{A}_{\alpha\beta}
    \textbf{x}_\beta \QuasiEqual \textbf{B}_{\alpha\beta}
    \textbf{x}_\beta]}\] by the Tautology Theorem and Universal
Generalization since $\textbf{x}_\beta$ does not occur in $\sS$.  It
follows from these that $\proves{\sG_\tau}{\textbf{A}_{\alpha\beta} =
  \textbf{B}_{\alpha\beta}}$ by A3, Lemma~\ref{lem:1},
Proposition~\ref{prop:1}, Universal Generalization, Universal
Instantiation, R$1'$, and R$2'$.

The rest of the proof is essentially the same as the proof of theorem
5500.
\end{proof}

\bsp A general model $\seq{\set{\sD_\alpha \;|\; \alpha \in \sT}, \sJ}$ for
{\qzerou} is \emph{frugal} if $\mbox{card}(\sD_\alpha) \le
\mbox{card}(\sL(\mbox{\qzerou}))$ for all $\alpha \in \sT$. \esp

\medskip

\begin{theorem}[Henkin's Theorem for {\qzerou}]
Every consistent set of sentences of {\qzerou} has a frugal general
model.
\end{theorem}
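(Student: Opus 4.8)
The plan is to combine the Extension Lemma with a Henkin-style term-model construction that closely parallels the proof of theorems 5501--5506 in~\cite{Andrews02}, modified so that the constructed frame contains genuine partial functions and the valuation function is correspondingly partial. First I would apply the Extension Lemma to the given consistent set $\sG$ of sentences, obtaining an expansion {\qzerouplus} and a set $\sH$ of sentences of {\qzerouplus} that is consistent, complete, and extensionally complete, with $\sG \subseteq \sH$ and $\mbox{card}(\sL(\mbox{\qzerouplus})) = \mbox{card}(\sL(\mbox{\qzerou}))$. I would then build a general model $\overline{\sM}$ of $\sH$ for {\qzerouplus} and finally take its reduct to the primitive constants of {\qzerou}, obtaining a frugal general model $\sM$ of $\sG$ for {\qzerou}.

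For the frame I would work with the \emph{provably defined} cwffs. By Proposition~\ref{prop:odefined} every $\cwff{o}$ is provably defined, and by completeness of $\sH$ every $\cwff{\alpha}$ satisfies exactly one of $\proves{\sH}{\textbf{A}_\alpha\IsDefApp}$ or $\proves{\sH}{\textbf{A}_\alpha\IsUndefApp}$ (not both, by consistency). On the provably defined cwffs of type $\alpha$ I would impose $\textbf{A}_\alpha \sim \textbf{B}_\alpha$ iff $\proves{\sH}{\textbf{A}_\alpha = \textbf{B}_\alpha}$ and take $\sD_\alpha$ to be the set of equivalence classes $[\textbf{A}_\alpha]$, identifying $\sD_o$ with $\set{\mname{T},\mname{F}}$ via $[T_o] \mapsto \mname{T}$ and $[F_o] \mapsto \mname{F}$ (these are distinct and exhaust $\sD_o$ by completeness and consistency). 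Application is defined by letting $[\textbf{A}_{\alpha\beta}]$ send $[\textbf{B}_\beta]$ to $[\textbf{A}_{\alpha\beta}\textbf{B}_\beta]$ when that wff is provably defined, and to be undefined there otherwise; axiom schema A8 makes this total when $\alpha = o$, while A10 together with the completeness dichotomy controls the $\alpha \not= o$ case, so that each class of type $\alpha\beta$ determines a partial function (total if $\alpha = o$), as the frame requires. Well-definedness follows from Leibniz' Law and R$1'$, and extensional completeness of $\sH$ is exactly what is needed to show that distinct classes of type $\alpha\beta$ determine distinct functions, so $\sD_{\alpha\beta}$ may be taken to be the resulting set of functions. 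I would set $\sJ(\textbf{c}_\alpha) = [\textbf{c}_\alpha]$ (legitimate since primitive constants are provably defined by A6) and verify, using A2 and A3, that $\sJ(\mname{Q}_{o\alpha\alpha})$ is the identity relation, and using A12 and A13, that $\sJ(\iota_{\alpha(o\alpha)})$ is the unique member selector.

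With the interpretation in hand I would define $\sV^{\overline{\sM}}$ and prove the key evaluation lemma by induction on wffs: for any assignment $\phi$ into the frame and any wff $\textbf{C}_\gamma$, if $\hat{\textbf{C}}_\gamma$ is the closed instance obtained by substituting a provably defined representative for each free variable, then $\sV^{\overline{\sM}}_\phi(\textbf{C}_\gamma) = [\hat{\textbf{C}}_\gamma]$ when $\hat{\textbf{C}}_\gamma$ is provably defined and $\sV^{\overline{\sM}}_\phi(\textbf{C}_\gamma)$ is undefined otherwise. The base cases use (a) and (b); the abstraction case uses (d) together with A7; and the application case matches (c), where A8 and A9 handle the type-$o$ subcase (an improper application is provably false, hence evaluates to $\mname{F}$) and A10 handles the type $\alpha \not= o$ subcase. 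Independence of $\hat{\textbf{C}}_\gamma$ from the chosen representatives, and the soundness of substituting defined cwffs for variables (always defined by A5), follow from Corollary~\ref{cor:5209} and Lemma~\ref{lem:3}. Once the evaluation lemma holds, each $\textbf{B}_o \in \sH$ satisfies $\proves{\sH}{\textbf{B}_o}$, hence $\proves{\sH}{\textbf{B}_o = T_o}$ by the Tautology Theorem, so $[\textbf{B}_o] = [T_o] = \mname{T}$ and $\overline{\sM} \models \textbf{B}_o$. Taking the reduct $\sM$ to the constants of {\qzerou} preserves the frame and the interpretations of $\mname{Q}$ and $\iota$, so $\sM$ is a general model of {\qzerou} with $\sM \models \sG$. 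Finally $\mbox{card}(\sD_\alpha) \le \mbox{card}(\cwffs{\alpha}) \le \mbox{card}(\sL(\mbox{\qzerouplus})) = \mbox{card}(\sL(\mbox{\qzerou}))$, so $\sM$ is frugal.

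The main obstacle is the partiality. Unlike Andrews' construction, the domains $\sD_{\alpha\beta}$ with $\alpha \not= o$ must hold genuine partial functions, and the evaluation lemma must reproduce the exact definedness behavior of conditions (c) and (d) rather than assign a value everywhere. The two delicate points are (i) verifying that application on classes is well defined \emph{and} has precisely the prescribed domain of definition---this is where A8--A10 and the completeness dichotomy do the work that totality did for Andrews---and (ii) applying extensional completeness correctly, since its statement now carries definedness hypotheses on $\textbf{A}_{\alpha\beta}$ and $\textbf{B}_{\alpha\beta}$ that must be discharged before it yields the separating witness $\textbf{C}_\beta$. Everything else is a faithful adaptation of the corresponding steps in the proof of theorems 5501--5506 in~\cite{Andrews02}.
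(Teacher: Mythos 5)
Your proposal is correct and follows essentially the same route as the paper: apply the Extension Lemma, build a term model whose domains consist of (equivalence classes of) provably defined cwffs with application given by $[\textbf{A}_{\alpha\beta}][\textbf{B}_\beta] = [\textbf{A}_{\alpha\beta}\textbf{B}_\beta]$ when provably defined, use A8--A10 and the completeness dichotomy to get the right partiality, use extensional completeness for injectivity at function types and A12/A13 for the iota constants, and verify conditions (a)--(d) via the closed-instance evaluation lemma $\sV_\phi(\textbf{C}_\gamma) \QuasiEqual \sV(\textbf{C}^{\phi}_{\gamma})$. The two delicate points you single out are exactly the ones the paper's proof addresses.
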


\begin{proof} 
Let $\sG$ be a consistent set of sentences of {\qzerou}, and let $\sH$
and {\qzerouplus} be as described in the Extension Lemma.  We define
simultaneously, by induction on $\gamma \in \sT$, a frame
$\set{\sD_\alpha \;|\; \alpha \in \sT}$ and a partial function $\sV$
whose domain is the set of cwffs of {\qzerouplus} so that the
following conditions hold for all $\gamma \in \sT$:

\bi

  \item [$(1^\gamma)$] $\sD_\gamma = \set{\sV(\textbf{A}_\gamma) \;|\;
  \textbf{A}_\gamma \mbox{ is a } \cwff{\gamma} \mbox{ and }
  \proves{\sH}{{\textbf{A}_\gamma\IsDefApp}}}$.

  \item [$(2^\gamma)$] $\sV(\textbf{A}_\gamma)$ is defined \,iff\,
  $\proves{\sH}{{\textbf{A}_\gamma\IsDefApp}}$ for all cwffs
  $\textbf{A}_\gamma$.

  \item [$(3^\gamma)$] $\sV(\textbf{A}_\gamma) =
  \sV(\textbf{B}_\gamma)$ \,iff\, $\proves{\sH}{\textbf{A}_\gamma =
  \textbf{B}_\gamma}$ for all cwffs $\textbf{A}_\gamma$ and
  $\textbf{B}_\gamma$.

\ei
Let $\sV(x) \QuasiEqual \sV(y)$ mean either $\sV(x)$ and $\sV(y)$ are
both defined and equal or $\sV(x)$ and $\sV(y)$ are both undefined.

For each cwff $\textbf{A}_o$, if $\proves{\sH}{\textbf{A}_o}$, let
$\sV(\textbf{A}_o) = \mname{T}$, and otherwise let $\sV(\textbf{A}_o)
= \mname{F}$.  Also, let $\sD_o = \set{\mname{T},\mname{F}}$.  By the
consistency and completeness of $\sH$, exactly one of
$\proves{\sH}{\textbf{A}_o}$ and $\proves{\sH}{\NegAlt\textbf{A}_o}$
holds.  Hence $(1^o)$ and $(3^o)$ are satisfied.  $(2^o)$ is satisfied
by Proposition~\ref{prop:odefined}.

For each cwff $\textbf{A}_{\iotaAltS}$, if
$\proves{\sH}{{\textbf{A}_{\iotaAltS}\IsDefApp}}$, let
\[\sV(\textbf{A}_{\iotaAltS}) = \set{\textbf{B}_{\iotaAltS} \;|\;
\textbf{B}_{\iotaAltS} \mbox{ is a } \cwff{\iotaAltS} \mbox{ and }
\proves{\sH}{\textbf{A}_{\iotaAltS} = \textbf{B}_{\iotaAltS}}},\] and
otherwise let $\sV(\textbf{A}_{\iotaAltS})$ be undefined.  Also, let
\[\sD_{\iotaAltS} = \set{\sV(\textbf{A}_{\iotaAltS}) \;|\; 
\textbf{A}_{\iotaAltS} \mbox{ is a } \cwff{\iotaAltS} \mbox{ and }
\proves{\sH}{{\textbf{A}_{\iotaAltS}\IsDefApp}}}.\] $(1^{\iotaAltS})$,
$(2^{\iotaAltS})$, and $(3^{\iotaAltS})$ are clearly satisfied.

Now suppose that $\sD_\alpha$ and $\sD_{\beta}$ are defined and that
the conditions hold for $\alpha$ and $\beta$.  For each cwff
$\textbf{A}_{\alpha\beta}$, if
$\proves{\sH}{{\textbf{A}_{\alpha\beta}\IsDefApp}}$, let
$\sV(\textbf{A}_{\alpha\beta})$ be the (partial or total) function
from $\sD_{\beta}$ to $\sD_\alpha$ whose value, for any argument
$\sV(\textbf{B}_\beta) \in \sD_\beta$, is
$\sV(\textbf{A}_{\alpha\beta}\textbf{B}_\beta)$ if
$\sV(\textbf{A}_{\alpha\beta}\textbf{B}_\beta)$ is defined and is
undefined if $\sV(\textbf{A}_{\alpha\beta}\textbf{B}_\beta)$ is
undefined, and otherwise let $\sV(\textbf{A}_{\alpha\beta})$ be
undefined.  We must show that this definition is independent of the
particular cwff $\textbf{B}_\beta$ used to represent the argument.  So
suppose $\sV(\textbf{B}_\beta) = \sV(\textbf{C}_\beta)$; then
$\proves{\sH}{\textbf{B}_\beta = \textbf{C}_\beta}$ by $(3^{\beta})$,
so $\proves{\sH}{\textbf{A}_{\alpha\beta}\textbf{B}_\beta \QuasiEqual
  \textbf{A}_{\alpha\beta}\textbf{C}_\beta}$ by Lemmas~\ref{lem:1}
and~\ref{lem:2} and R$1'$, and so
$\sV(\textbf{A}_{\alpha\beta}\textbf{B}_\beta) \QuasiEqual
\sV(\textbf{A}_{\alpha\beta}\textbf{C}_\beta)$ by $(2^{\alpha})$ and
$(3^{\alpha})$, Finally, let \[\sD_{\alpha\beta} =
\set{\sV(\textbf{A}_{\alpha\beta}) \;|\; \textbf{A}_{\alpha\beta}
  \mbox{ is a } \cwff{\alpha\beta} \mbox{ and }
  \proves{\sH}{{\textbf{A}_{\alpha\beta}\IsDefApp}}}.\]
$(1^{\alpha\beta})$ and $(2^{\alpha\beta})$ are clearly satisfied; we
must show that $(3^{\alpha\beta})$ is satisfied.  Suppose
$\sV(\textbf{A}_{\alpha\beta}) = \sV(\textbf{B}_{\alpha\beta})$.  Then
$\proves{\sH}{{\textbf{A}_{\alpha\beta}\IsDefApp}}$ and
$\proves{\sH}{{\textbf{B}_{\alpha\beta}\IsDefApp}}$.  Since $\sH$ is
extensionally complete, there is a $\textbf{C}_\beta$ such that
$\proves{\sH}{{\textbf{C}_{\beta}\IsDefApp}}$ and
\[\proves{\sH}{[{\textbf{A}_{\alpha\beta}\IsDefApp} \And
{\textbf{B}_{\alpha\beta}\IsDefApp} \And [\textbf{A}_{\alpha\beta}
  \textbf{C}_\beta \QuasiEqual \textbf{B}_{\alpha\beta}
  \textbf{C}_\beta]] \Implies [\textbf{A}_{\alpha\beta} =
    \textbf{B}_{\alpha\beta}]}.\] Then
$\sV(\textbf{A}_{\alpha\beta}\textbf{C}_\beta) \QuasiEqual
\sV(\textbf{A}_{\alpha\beta})(\sV(\textbf{C}_\beta)) \QuasiEqual
\sV(\textbf{B}_{\alpha\beta})(\sV(\textbf{C}_\beta)) \QuasiEqual
\sV(\textbf{B}_{\alpha\beta}\textbf{C}_\beta),$ so
$\proves{\sH}{\textbf{A}_{\alpha\beta}\textbf{C}_\beta \QuasiEqual
  \textbf{B}_{\alpha\beta}\textbf{C}_\beta}$ by $(2^\alpha)$ and
$(3^\alpha)$, and so $\proves{\sH}{\textbf{A}_{\alpha\beta} =
  \textbf{B}_{\alpha\beta}}$.  Now suppose
$\proves{\sH}{\textbf{A}_{\alpha\beta} = \textbf{B}_{\alpha\beta}}$.
Then, for all cwffs $\textbf{C}_\beta \in \sD_\beta$,
$\proves{\sH}{\textbf{A}_{\alpha\beta}\textbf{C}_\beta \QuasiEqual
  \textbf{B}_{\alpha\beta}\textbf{C}_\beta}$ by Lemmas~\ref{lem:1}
and~\ref{lem:2} and R$1'$, and so
$\sV(\textbf{A}_{\alpha\beta})(\sV(\textbf{C}_\beta)) \QuasiEqual
\sV(\textbf{A}_{\alpha\beta}\textbf{C}_\beta) \QuasiEqual
\sV(\textbf{B}_{\alpha\beta}\textbf{C}_\beta) \QuasiEqual
\sV(\textbf{B}_{\alpha\beta})(\sV(\textbf{C}_\beta)).$ Hence
$\sV(\textbf{A}_{\alpha\beta}) = \sV(\textbf{B}_{\alpha\beta})$.

We claim that $\sM = \seq{\set{\sD_\alpha \;|\; \alpha \in \sT}, \sV}$
is an interpretation.  For each primitive constant $\textbf{c}_\gamma$
of {\qzerouplus}, $\proves{\sH}{\textbf{c}_\gamma}$ by A6, and thus
$\sV$ maps each primitive constant of {\qzerouplus} of type $\gamma$
into $\sD_\gamma$ by $(1^{\gamma})$ and $(2^{\gamma})$.

\bsp We must show that $\sV(\mname{Q}_{o\alpha\alpha})$ is the
identity relation on $\sD_\alpha$.  Let $\sV(\textbf{A}_\alpha)$ and
$\sV(\textbf{B}_\alpha)$ be arbitrary members of $\sD_\alpha$.  Then
$\sV(\textbf{A}_\alpha) = \sV(\textbf{B}_\alpha)$ iff
$\proves{\sH}{\textbf{A}_\alpha = \textbf{B}_\alpha}$ iff
$\proves{\sH}{\mname{Q}_{o\alpha\alpha}\textbf{A}_\alpha\textbf{B}_\alpha}$
iff $\mname{T} =
\sV(\mname{Q}_{o\alpha\alpha}\textbf{A}_\alpha\textbf{B}_\alpha) =
\sV(\mname{Q}_{o\alpha\alpha})
(\sV(\textbf{A}_\alpha))(\sV(\textbf{B}_\alpha)).$ Thus
$\sV(\mname{Q}_{o\alpha\alpha})$ is the identity relation on
$\sD_\alpha$. \esp

We must show that, for $\alpha \not= o$,
$\sV(\iota_{\alpha(o\alpha)})$ is the unique member selector on
$\sD_\alpha$.  For $\alpha \not= o$, let $\textbf{A}_{o\alpha}$ be an
arbitrary member of $\sD_{o\alpha}$, $\textbf{B}_\alpha$ be an
arbitrary member of $\sD_\alpha$, and $\textbf{x}_\alpha$ be a
variable that does not occur in $\textbf{A}_{o\alpha}$.  Using
A12 and A13, $\sV(\textbf{A}_{o\alpha}) =
\sV(\mname{Q}_{o\alpha\alpha} \textbf{B}_\alpha)$ iff
$\proves{\sH}{\textbf{A}_{o\alpha} = \mname{Q}_{o\alpha\alpha}
\textbf{B}_\alpha}$ iff
$\proves{\sH}{\iota_{\alpha(o\alpha)}\textbf{A}_{o\alpha} =
\textbf{B}_\alpha}$ iff
$\sV(\iota_{\alpha(o\alpha)}\textbf{A}_{o\alpha}) = \sV
(\textbf{B}_\alpha)$ iff
$\sV(\iota_{\alpha(o\alpha)})(\sV(\textbf{A}_{o\alpha})) = \sV
(\textbf{B}_\alpha)$.  Similarly, using A12 and A13,
$\sV(\NegAlt\Forsome_1 \textbf{x}_\alpha
[\textbf{A}_{o\alpha}\textbf{x}_\alpha]) = \mname{T}$ iff
$\proves{\sH}{\NegAlt\Forsome_1 \textbf{x}_\alpha
[\textbf{A}_{o\alpha}\textbf{x}_\alpha]}$ iff
$\proves{\sH}{\iota_{\alpha(o\alpha)}\textbf{A}_{o\alpha}
\IsUndefApp}$ iff $\sV(\iota_{\alpha(o\alpha)}\textbf{A}_{o\alpha})$
is undefined iff $\sV(\iota_{\alpha(o\alpha)})
(\sV(\textbf{A}_{o\alpha}))$ is undefined.  Thus
$\sV(\iota_{\alpha(o\alpha)})$ is the unique member selector on
$\sD_\alpha$.

Thus $\sM$ is an interpretation.  We claim further that $\sM$ is a
general model for {\qzerouplus}.  For each assignment $\phi$ into
$\sM$ and wff $\textbf{C}_\gamma$, let \[\textbf{C}^{\phi}_{\gamma} =
\mname{S}^{{\bf x}^{1}_{\delta_1} \cdots {\bf x}^{n}_{\delta_n}}_{{\bf
    E}^{1}_{\delta_1} \cdots {\bf E}^{n}_{\delta_n}}
\textbf{C}_\gamma\] where ${\bf x}^{1}_{\delta_1} \cdots {\bf
  x}^{n}_{\delta_n}$ are the free variables of $\textbf{C}_\gamma$ and
${\bf E}^{i}_{\delta_i}$ is the first cwff (in some fixed enumeration)
of {\qzerouplus} such that $\phi({\bf x}^{i}_{\delta_i}) = \sV({\bf
  E}^{i}_{\delta_i})$ for all $i$ with $1 \le i \le
n$.\footnote{$\mname{S}^{{\bf x}^{1}_{\delta_1} \cdots {\bf
      x}^{n}_{\delta_n}}_{{\bf E}^{1}_{\delta_1} \cdots {\bf
      E}^{n}_{\delta_n}} \textbf{C}_\gamma$ is the result of
  simultaneously substituting $\textbf{E}^{i}_{\delta_i}$ for each
  free occurrence of $\textbf{x}^{i}_{\delta_i}$ in
  $\textbf{C}_\gamma$ for all $i$ with $1 \le i \le n$.}  Let
$\sV_\phi(\textbf{C}_\gamma) \QuasiEqual
\sV(\textbf{C}^{\phi}_{\gamma})$.  $\textbf{C}^{\phi}_{\gamma}$ is
clearly a $\cwff{\gamma}$, so $\sV_\phi(\textbf{C}_\gamma) \in
\sD_\gamma$ if $\sV_\phi(\textbf{C}_\gamma)$ is defined.

\be

  \item[(a)] Let $\textbf{C}_\gamma$ be a variable
  $\textbf{x}_\delta$.  Choose $\textbf{E}_\delta$ so that
  $\phi(\textbf{x}_\delta) = \sV(\textbf{E}_\delta)$ as above.  Then
  $\sV_\phi(\textbf{C}_\gamma) = \sV_\phi(\textbf{x}_\delta) =
  \sV(\textbf{x}^{\phi}_{\delta}) = \sV(\textbf{E}_\delta) =
  \phi(\textbf{x}_\delta)$.

  \item[(b)] Let $\textbf{C}_\gamma$ be a primitive constant.  Then
    $\sV_\phi(\textbf{C}_\gamma) = \sV(\textbf{C}^{\phi}_{\gamma}) =
    \sV(\textbf{C}_{\gamma})$.

  \item[(c)] \bsp Let $\textbf{C}_\gamma$ be
  $[\textbf{A}_{\alpha\beta} \textbf{B}_\beta]$.  If
  $\sV_{\phi}(\textbf{A}_{\alpha\beta})$ is defined,
  $\sV_{\phi}(\textbf{B}_\beta)$ is defined, and
  $\sV_{\phi}(\textbf{A}_{\alpha\beta})$ is defined at
  $\sV_{\phi}(\textbf{B}_\beta)$, then $\sV_{\phi}(\textbf{C}_\gamma)
  = \sV_{\phi}(\textbf{A}_{\alpha\beta}\textbf{B}_\beta) =
  \sV(\textbf{A}^{\phi}_{\alpha\beta}\textbf{B}^{\phi}_\beta) =
  \sV(\textbf{A}^{\phi}_{\alpha\beta})(\sV(\textbf{B}^{\phi}_\beta)) =
  \sV_{\phi}(\textbf{A}_{\alpha\beta})(\sV_{\phi}(\textbf{B}_\beta)).$
  Now assume $\sV_{\phi}(\textbf{A}_{\alpha\beta})$ is undefined,
  $\sV_{\phi}(\textbf{B}_\beta)$ is undefined, or
  $\sV_{\phi}(\textbf{A}_{\alpha\beta})$ is not defined at
  $\sV_{\phi}(\textbf{B}_\beta)$.  Then
  $\proves{\sH}{{\textbf{A}^{\phi}_{\alpha\beta}\IsUndefApp}}$,
  $\proves{\sH}{{\textbf{B}^{\phi}_{\beta}\IsUndefApp}}$, or
  $\sV(\textbf{A}^{\phi}_{\alpha\beta}\textbf{B}^{\phi}_{\beta})$ is
  undefined.  If $\alpha = o$, then
  $\proves{\sH}{{\textbf{A}^{\phi}_{\alpha\beta}\IsUndefApp}}$ or
  $\proves{\sH}{{\textbf{B}^{\phi}_{\beta}\IsUndefApp}}$, which implies
  $\proves{\sH}{\NegAlt
  \textbf{A}^{\phi}_{\alpha\beta}\textbf{B}^{\phi}_{\beta}}$ by A9, so
  $\proves{\sH}{\textbf{A}^{\phi}_{\alpha\beta}\textbf{B}^{\phi}_{\beta} =
  F_o}$, so $\sV(\textbf{A}^{\phi}_{\alpha\beta}\textbf{B}^{\phi}_{\beta})
  = \sV(F_o)$, so $\sV_\phi(\textbf{A}_{\alpha\beta}\textbf{B}_\beta) =
  \sV(F_o)$, and so $\sV_\phi(\textbf{C}_\gamma) = \mname{F}$.  If
  $\alpha \not= o$ and
  $\proves{\sH}{{\textbf{A}^{\phi}_{\alpha\beta}\IsUndefApp}}$ or
  $\proves{\sH}{{\textbf{B}^{\phi}_{\beta}\IsUndefApp}}$, then
  $\proves{\sH}{{[\textbf{A}^{\phi}_{\alpha\beta}
  \textbf{B}^{\phi}_{\beta}]\IsUndefApp}}$ by A10, and so
  $\sV(\textbf{A}^{\phi}_{\alpha\beta}\textbf{B}^{\phi}_{\beta})$ is
  undefined.  Hence, if $\alpha \not= o$,
  $\sV_{\phi}(\textbf{C}_\gamma) \QuasiEqual
  \sV_{\phi}(\textbf{A}_{\alpha\beta}\textbf{B}_\beta) \QuasiEqual
  \sV(\textbf{A}^{\phi}_{\alpha\beta}\textbf{B}^{\phi}_\beta)$ is
  undefined. \esp

  \item[(d)] Let $\textbf{C}_\gamma$ be
  $[\lambda\textbf{x}_\alpha\textbf{B}_\beta]$.  Let
  $\sV(\textbf{E}_\alpha)$ be an arbitrary member of $\sD_\alpha$, and
  so $\textbf{E}_\alpha$ is a cwff and
  $\proves{\sH}{{\textbf{E}_\alpha\IsDefApp}}$.  Given an assignment
  $\phi$, let $\psi = (\phi :
  \textbf{x}_\alpha/\sV(\textbf{E}_\alpha)$.  From A4 it follows that
  $\proves{\sH}{[\lambda \textbf{x}_\alpha \textbf{B}_\beta]^{\phi}
  \textbf{E}_{\alpha} \QuasiEqual \textbf{B}^{\psi}_{\beta}}$.  Then
  $\sV_{\phi}(\textbf{C}_\gamma)(\sV(\textbf{E}_\alpha)) \QuasiEqual
  \sV([\lambda\textbf{x}_\alpha\textbf{B}_\beta]^{\phi})
  (\sV(\textbf{E}_\alpha)) \QuasiEqual \sV(\textbf{B}^{\psi}_{\beta})
  \QuasiEqual \sV_\psi(\textbf{B}_{\beta}).$ Thus
  $\sV_{\phi}(\textbf{C}_\gamma)$ satisfies condition (d) in the
  definition of a general model.

\ee

Thus $\sM$ is a general model for {\qzerouplus} (and hence for
{\qzerou}).  Also, if $\textbf{A}_o \in \sG$, then $\textbf{A}_o \in
\sH$, so $\proves{\sH}{\textbf{A}_o}$, so $\sV(\textbf{A}_o) =
\mname{T}$ and $\sM \models \textbf{A}_o$, so $\sM$ is a general model
for $\sG$.  Clearly, (1) $\mbox{card}(\sD_\alpha) \le
\mbox{card}(\sL(\mbox{\qzerou}))$ since $\sV$ maps a subset of the
$\cwffs{\alpha}$ of {\qzerouplus} onto $\sD_\alpha$ and (2)
$\mbox{card}(\sL(\mbox{\qzerouplus})) =
\mbox{card}(\sL(\mbox{\qzerou}))$, and so $\sM$ is frugal.
\end{proof}

\begin{theorem}[Henkin's Completeness Theorem for {\qzerou}]
Let $\sH$ be a set of sentences of {\qzerou}.  If $\sH \models
\textbf{A}_o$, then $\proves{\sH}{\textbf{A}_o}$.
\end{theorem}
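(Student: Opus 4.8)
The plan is to prove the contrapositive in the standard Henkin style, using the just-established Henkin's Theorem for {\qzerou} as the engine: assuming $\sH \not\vdash \textbf{A}_o$, I would exhibit a general model of $\sH$ in which $\textbf{A}_o$ is not valid, which gives $\sH \not\models \textbf{A}_o$. Since Henkin's Theorem is stated for consistent sets of \emph{sentences}, the first step is to reduce the theorem to the case in which $\textbf{A}_o$ is itself a sentence.

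For the reduction, let $\textbf{x}^1_{\alpha_1}, \ldots, \textbf{x}^n_{\alpha_n}$ be the free variables of $\textbf{A}_o$ and let $\textbf{A}^{\ast}_o = \Forall \textbf{x}^1_{\alpha_1} \cdots \Forall \textbf{x}^n_{\alpha_n} \textbf{A}_o$ be its universal closure, which is a sentence. On the syntactic side, $\proves{\sH}{\textbf{A}_o}$ iff $\proves{\sH}{\textbf{A}^{\ast}_o}$: the forward direction is $n$ applications of Universal Generalization (Theorem~\ref{thm:gen}), legitimate because no variable is free in any member of the sentence set $\sH$; the backward direction is $n$ applications of Universal Instantiation (Theorem~\ref{thm:5215}), instantiating each $\textbf{x}^i_{\alpha_i}$ by itself, with the premises ${\textbf{x}^i_{\alpha_i}\IsDefApp}$ discharged by A5. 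On the semantic side, clause (d) of the definition of a general model together with the definition of $\Forall$ gives $\sM \models \Forall \textbf{x}_\alpha \textbf{B}_o$ iff $\sM \models \textbf{B}_o$, so iterating yields $\sH \models \textbf{A}_o$ iff $\sH \models \textbf{A}^{\ast}_o$. Hence it suffices to prove the theorem when $\textbf{A}_o$ is a sentence, and I assume this henceforth.

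Now suppose $\textbf{A}_o$ is a sentence with $\sH \not\vdash \textbf{A}_o$, and set $\sG = \sH \cup \set{\NegAlt \textbf{A}_o}$, again a set of sentences. I would first check that $\sG$ is consistent. If not, then $\proves{\sG}{F_o}$, so by the Deduction Theorem (Theorem~\ref{thm:deduction}) $\proves{\sH}{\NegAlt \textbf{A}_o \Implies F_o}$; since $[\NegAlt \textbf{A}_o \Implies F_o] \Implies \textbf{A}_o$ is tautologous, the Tautology Theorem (Theorem~\ref{thm:tautology}) yields $\proves{\sH}{\textbf{A}_o}$, contradicting the assumption. Thus $\sG$ is consistent, and Henkin's Theorem for {\qzerou} furnishes a frugal general model $\sM$ of $\sG$. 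Then $\sM$ is a general model of $\sH$ (as $\sH \subseteq \sG$) and $\sM \models \NegAlt \textbf{A}_o$. Because $\textbf{A}_o$ has type $o$, every general model assigns it a defined truth value (immediate from clauses (a)--(c) of the definition, since a type-$o$ wff is a variable, a primitive constant, or a function application of type $o$), and unwinding $\NegAlt \textbf{A}_o = [\mname{Q}_{ooo} F_o \textbf{A}_o] = [F_o = \textbf{A}_o]$ shows that $\sV^{\cal M}_\phi(\NegAlt \textbf{A}_o) = \mname{T}$ forces $\sV^{\cal M}_\phi(\textbf{A}_o) = \mname{F}$; as $\textbf{A}_o$ is closed, this value is independent of $\phi$. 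Hence $\sM \not\models \textbf{A}_o$, so $\sH \not\models \textbf{A}_o$, completing the contrapositive.

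I expect the only real care to be in the reduction to sentences---specifically, making sure the free variables of $\textbf{A}_o$ are handled by Generalization and Instantiation on the derivability side and by the $\Forall$-clause on the validity side in a genuinely reversible way---and in confirming that negation behaves classically on the always-defined type-$o$ wff $\textbf{A}_o$, so that refuting the validity of $\NegAlt \textbf{A}_o$ is the same as producing an assignment on which $\textbf{A}_o$ evaluates to $\mname{F}$. Everything else is the routine contradiction argument, and all of its ingredients---the Deduction and Tautology Theorems and Henkin's Theorem for {\qzerou}---are already in hand.
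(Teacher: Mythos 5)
Your proposal is correct and is essentially the paper's own argument: both reduce to the universal closure of $\textbf{A}_o$ (using Universal Generalization one way and Universal Instantiation with A5 the other), both apply Henkin's Theorem to $\sH$ together with the negation of that closure, and both pass between inconsistency of that set and derivability of the closure via the Deduction and Tautology Theorems. The only difference is presentational---you argue the contrapositive where the paper runs the same steps as a reductio---so no substantive comparison is needed.
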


\begin{proof} 
Assume $\sH \models \textbf{A}_o$, and let $\textbf{B}_o$ be the
universal closure of $\textbf{A}_o$.  Then $\sH \models \textbf{B}_o$.
Suppose $\sH \cup \{\NegAlt \textbf{B}_o\}$ is consistent.  Then, by
Henkin's Theorem, there is a general model $\sM$ for $\sH \cup
\set{\NegAlt \textbf{B}_o}$, and so $\sM \models \NegAlt
\textbf{B}_o$.  Since $\sM$ is also a general model for $\sH$,
$\sM \models \textbf{B}_o$.  From this contradiction it follows that
$\sH \cup \set{\NegAlt \textbf{B}_o}$ is inconsistent.  Hence
$\proves{\sH}{\textbf{B}_o}$ by the Deduction Theorem and the
Tautology Theorem.  Therefore, $\proves{\sH}{\textbf{A}_o}$ by
Universal Instantiation and A5.
\end{proof}

\section{Conclusion} \label{sec:conclusion}

{\qzerou} is a version of Church's type theory that directly
formalizes the traditional approach to undefinedness.  In this paper
we have presented the syntax, semantics, and proof system of
{\qzerou}.  The semantics is based on Henkin-style general models.  We
have also proved that {\qzerou} is sound and complete with respect to
its semantics.

{\qzerou} is a modification of {\qzero}.  Its syntax is essentially
identical to the syntax of {\qzero}.  Its semantics is based on
general models that include partial functions as well as total
functions and in which terms may be nondenoting.  Its proof system is
derived from the proof system of {\qzero}; the axiom schemas and rules
of inference of {\qzero} have been modified to accommodate partial
functions and undefined terms and to axiomatize definite description.

Our presentation of {\qzerou} is intended to show as clearly as
possible what must be changed in Church's type theory in order to
formalize the traditional approach to undefinedness.  Our development
of {\qzerou} closely follows Andrews' development of {\qzero}.  Notes
indicate where and how {\qzero} and {\qzerou} differ from each other.
And the proofs of the soundness and completeness theorems for
{\qzerou} follow very closely the proofs of these theorems for
{\qzero}.

{\qzero} and {\qzerou} have the same \emph{theoretical expressivity}
(see Note~\ref{note:mutual}).  However, with its formalization of the
traditional approach, {\qzerou} has significantly greater
\emph{practical expressivity} than {\qzero}.  Statements involving
partial functions and undefined terms can be expressed in {\qzerou}
more naturally and concisely than in {\qzero} (see~\cite{Farmer04}).
All the standard laws of predicate logic hold in {\qzerou} except
those involving equality and substitution, but these do hold for
defined terms.  In summary, {\qzerou} has the benefit of greater
practical expressivity at the cost of a modest departure from standard
predicate logic.

The benefits of a practical logic like {\qzerou} would be best
realized by a computer implementation of the logic.  {\qzerou} has not
been implemented, but the related logic
{\lutins}~\cite{Farmer90,Farmer93b,Farmer94} has been implemented in
the {\imps} theorem proving system~\cite{FarmerEtAl93,FarmerEtAl96}
and successfully used to prove hundreds of theorems in traditional
mathematics, especially in mathematical analysis.  {\lutins} is
essentially just a more sophisticated version of {\qzerou} with
subtypes and additional expression constructors.  An implemented logic
that formalizes the traditional approach to undefinedness can reap the
benefits of a proven approach developed in mathematical practice over
hundreds of years.

\section{Acknowledgments} 

Peter Andrews deserves special thanks for writing \emph{An
Introduction to Mathematical Logic and Type Theory: To Truth through
Proof}~\cite{Andrews02}.  The ideas embodied in {\qzerou} heavily
depend on the presentation of {\qzero} given in this superb textbook.

\bibliography{$HOME/research/lib/imps}

\begin{thebibliography}{10}

\bibitem{Andrews63}
P.~B. Andrews.
\newblock A reduction of the axioms for the theory of propositional types.
\newblock {\em Fundamenta Mathematicae}, 52:345--350, 1963.

\bibitem{Andrews02}
P.~B. Andrews.
\newblock {\em An Introduction to Mathematical Logic and Type Theory: To Truth
  through Proof, Second Edition}.
\newblock Kluwer, 2002.

\bibitem{AndrewsEtAl96}
P.~B. Andrews, M.~Bishop, S.~Issar, D.~Nesmith, F.~Pfennig, and H.~Xi.
\newblock {TPS}: A Theorem Proving System for Classical Type Theory.
\newblock {\em Journal of Automated Reasoning}, 16:321--353, 1996.

\bibitem{Church40}
A.~Church.
\newblock A Formulation of the Simple Theory of Types.
\newblock {\em Journal of Symbolic Logic}, 5:56--68, 1940.

\bibitem{CoquandHuet88}
T.~Coquand and G.~Huet.
\newblock The Calculus of Constructions.
\newblock {\em Information and Computation}, 76:95--120, 1988.

\bibitem{Farmer90}
W.~M. Farmer.
\newblock A Partial Functions Version of {Church's} Simple Theory of Types.
\newblock {\em Journal of Symbolic Logic}, 55:1269--91, 1990.

\bibitem{Farmer93b}
W.~M. Farmer.
\newblock A Simple Type Theory with Partial Functions and Subtypes.
\newblock {\em Annals of Pure and Applied Logic}, 64:211--240, 1993.

\bibitem{Farmer94}
W.~M. Farmer.
\newblock Theory Interpretation in Simple Type Theory.
\newblock In J.~Heering et~al., editor, {\em Higher-Order Algebra, Logic, and
  Term Rewriting}, volume 816 of {\em Lecture Notes in Computer Science}, pages
  96--123. Springer-Verlag, 1994.

\bibitem{Farmer04}
W.~M. Farmer.
\newblock Formalizing Undefinedness Arising in Calculus.
\newblock In D.~Basin and M.~Rusinowitch, editors, {\em Automated
  Reasoning---IJCAR 2004}, volume 3097 of {\em Lecture Notes in Computer
  Science}, pages 475--489. Springer-Verlag, 2004.

\bibitem{FarmerEtAl93}
W.~M. Farmer, J.~D. Guttman, and F.~J. Thayer.
\newblock {IMPS}: An {I}nteractive {M}athematical {P}roof {S}ystem.
\newblock {\em Journal of Automated Reasoning}, 11:213--248, 1993.

\bibitem{FarmerEtAl96}
W.~M. Farmer, J.~D. Guttman, and F.~J. {Thayer F\'abrega}.
\newblock {IMPS}: An Updated System Description.
\newblock In M.~McRobbie and J.~Slaney, editors, {\em Automated
  Deduction---CADE-13}, volume 1104 of {\em Lecture Notes in Computer Science},
  pages 298--302. Springer-Verlag, 1996.

\bibitem{GirardEtAl89}
J.-Y. Girard, Y.~Lafont, and P.~Taylor.
\newblock {\em Proofs and Types}, volume~7 of {\em Cambridge Tracts in
  Theoretical Computer Science}.
\newblock Cambridge University Press, 1989.

\bibitem{GordonMelham93}
M.~J.~C. Gordon and T.~F. Melham.
\newblock {\em Introduction to {HOL}: A Theorem Proving Environment for Higher
  Order Logic}.
\newblock Cambridge University Press, 1993.

\bibitem{Henkin50}
L.~Henkin.
\newblock Completeness in the Theory of Types.
\newblock {\em Journal of Symbolic Logic}, 15:81--91, 1950.

\bibitem{Henkin63}
L.~Henkin.
\newblock A theory of propositional types.
\newblock {\em Fundamenta Mathematicae}, 52:323--344, 1963.

\bibitem{Lemma1Ltd00alt}
{Lemma 1 Ltd.}
\newblock {\em ProofPower: Description}, 2000.
\newblock Available at {\sf http://www.lemma-one.com/ProofPower/doc/doc.html}.

\bibitem{Martin-Lof84}
P.~Martin-L\"of.
\newblock {\em Intuitionistic Type Theory}.
\newblock Bibliopolis, 1984.

\bibitem{OwreEtAl96}
S.~Owre, S.~Rajan, J.~M. Rushby, N.~Shankar, and M.~Srivas.
\newblock {PVS}: Combining Specification, Proof Checking, and Model Checking.
\newblock In R.~Alur and T.~A. Henzinger, editors, {\em Computer Aided
  Verification: 8th International Conference, {CAV} '96}, volume 1102 of {\em
  Lecture Notes in Computer Science}, pages 411--414. Springer-Verlag, 1996.

\bibitem{Paulson94}
L.~C. Paulson.
\newblock {\em Isabelle: {A} Generic Theorem Prover}, volume 828 of {\em
  Lecture Notes in Computer Science}.
\newblock Springer-Verlag, 1994.

\bibitem{Stump03}
A.~Stump.
\newblock Subset Types and Partial Functions.
\newblock In F.~Baader, editor, {\em Automated Deduction---CADE-19}, volume
  2741 of {\em Lecture Notes in Computer Science}, pages 151--165.
  Springer-Verlag, 2003.

\end{thebibliography}
\bibliographystyle{plain-casefix}

\end{document}